\definecolor{darkred}{rgb}{0.4,0.1,0.1}
\definecolor{darkblue}{rgb}{0.1,0.1,0.4}
\numberwithin{equation}{section}
\theoremstyle{plain}% default
\newtheorem{theorem}{Theorem}[section]
\newtheorem{lemma}[theorem]{Lemma}
\newtheorem{proposition}[theorem]{Proposition}
\newtheorem{corollary}[theorem]{Corollary}
\theoremstyle{remark}
\newtheorem{remark}[theorem]{Remark}
\theoremstyle{definition}
\newtheorem{example}[theorem]{Example}
\newtheorem{definition}[theorem]{Definition}
\newtheorem{hypothesis}[theorem]{Hypothesis}
\newcommand\cH{\mathcal H}
\newcommand\cN{\mathcal N}
\newcommand\eps{\varepsilon}
\DeclareMathOperator{\spann}{span}
\DeclareMathOperator{\diver}{div}
\DeclareMathOperator{\curl}{curl}
\definecolor{darkgreen}{rgb}{0.1,0.45,0.1}
\definecolor{darkblue}{rgb}{0.1,0.1,0.4}
\definecolor{darkgrey}{rgb}{0.5,0.5,0.5}
\definecolor{darkred}{rgb}{0.6,0.0,0.0}
\newcommand\void[1]{}
\def\eps{\varepsilon}
\DeclareMathOperator\argmin{arg\,min}
\renewcommand{\phi}{\varphi}
\def\sa{\mathfrak a}
   \def\cH{{\mathcal H}}   
   \def\cN{{\mathcal N}}   \def\cO{{\mathcal O}}
\def\cV{{\mathcal V}}
\def\R{\mathbb{R}}
\def\C{\mathbb{C}}
\def\N{\mathbb{N}}
\newcommand{\dom}{\mathrm{dom}\,}
\newcommand{\e}{\textup{e}}
\def\ee{{\mathrm e}}
\newcounter{counter_a}
\title[On the hot spots conjecture in higher dimensions]{On the hot spots conjecture in higher dimensions}
 \author[J.B.~Kennedy]{James B.~Kennedy}
 \address{Departamento de Matem\'atica, Faculdade de Ci\^encias, Universidade de Lisboa, Campo Grande, Edif\'icio C6, 1749-016 Lisboa, Portugal {\rm and} Grupo de F\'isica Matem\'atica, Instituto Superior T\'ecnico, Av.\ Rovisco Pais, 1049-001 Lisboa, Portugal}
 \email{jbkennedy@ciencias.ulisboa.pt}
 \author[J.~Rohleder]{Jonathan Rohleder}
 \address{Matematiska institutionen \\ Stockholms universitet \\
 106 91 Stockholm \\
 Sweden}
 \email{jonathan.rohleder@math.su.se}
\thanks{The authors wish to thank Serge Nicaise for his valuable help with the literature on Maxwell's equation and curl curl operators. J.B.K.\ extends his thanks for the hospitality afforded him during a visit to Stockholm University, where part of this work was completed. The work of J.B.K.\ was partly supported by the Funda\c{c}\~ao para a Ci\^encia e a Tecnologia, Portugal, within the scope of the project SpectralOPs, reference 2023.13921.PEX, and grant UIDB/00208/2020 (DOI 10.54499/UIDB/00208/2020). The work of J.R.\ was partly supported by grant no.\ 2022-03342 of the Swedish Research Council (VR)
}
\keywords{Laplace operator, Neumann boundary conditions, mixed boundary conditions, eigenvalue inequality, polyhedral domain, Lipschitz domain}
\subjclass[2020]{Primary 35J05, 35J25; secondary 35J50, 35J57, 35P15.}
\begin{document}

\begin{abstract}
We prove a strong form of the hot spots conjecture for a class of domains in $\R^d$ which are a natural generalization of the lip domains of Atar and Burdzy [J.\ Amer.\ Math.\ Soc.\ 17 (2004), 243--265] in dimension two, as well as for a class of symmetric domains in $\R^d$ generalizing the domains studied by Jerison and Nadirashvili [J.\ Amer.\ Math.\ Soc.\ 13 (2000), 741--772]. Our method of proof is based on studying a vector-valued Laplace operator whose spectrum contains the spectrum of the Neumann Laplacian. This proof is essentially variational and does not require tools from stochastic analysis, nor does it use deformation arguments. In particular, it contains a new proof of the main result of Jerison and Nadirashvili.
\end{abstract}

\maketitle

\section{Introduction}
% \label{sec:intro}

Our goal in this paper is to prove the \emph{hot spots conjecture} of Rauch for certain classes of domains in arbitrary space dimension. We recall that this conjecture, in perhaps its most common form, asserts that any eigenfunction $\psi_2$ associated with the first nontrivial eigenvalue $\mu_2 > 0$ of the Neumann Laplacian,
\begin{displaymath}
\begin{aligned}
	-\Delta \psi_2 &= \mu_2 \psi_2 \qquad &&\text{in } \Omega,\\
	\frac{\partial \psi_2}{\partial\nu} &=0 &&\text{on } \partial\Omega,
\end{aligned}
\end{displaymath}
on a bounded, sufficiently regular domain $\Omega \subset \R^d$ (with boundary $\partial \Omega$ and outer unit normal $\nu$), attains its minimum and its maximum only on $\partial\Omega$. The name \emph{hot spots} stems from the significance of $\psi_2$ for the heat equation via Fourier's method; the intuition is that, in the case of a perfectly insulated body (Neumann conditions), the hottest and coldest spots in the body will migrate to the surface of the body for large times.

We recall that the conjecture, dating to the 1970s, is known to be true in $\R^2$ for certain special classes of convex domains, in particular triangles \cite{JM20,JM20err}, as well as ``long and thin'' domains \cite{AB04,BB99}, as well as certain domains symmetric with respect to both axes of symmetry \cite{JN00} and convex domains with one axis of symmetry \cite{P02}, although it is still open for convex planar domains in general. However, counterexample domains (with holes) have been known for around a quarter of a century, at least in dimension two \cite{BW99}. For further recent developments around the hot spots conjecture we refer to \cite{JM22,K21,KT19,S20,S23}. In particular, it was shown very recently in \cite{DP24} that even on convex domains the conjecture can be false in high dimension. The corresponding problem for the Laplacian with mixed Neumann-Dirichlet boundary conditions has been extensively studied in recent years in the case of planar domains in \cite{BP04,BPP04,H24+,H24a,H24b}; a special case of it in higher dimensions will also be included in our results here. Analogous problems for Laplacians on discrete and metric graphs were considered in \cite{GP19,KR21,LS24+}.

While most of the proofs to date either rely on fine estimates for reflected Brownian motion, or else make heavy use of symmetries, a new variational approach was recently introduced 
% by the second author 
\cite{Rprep}. This approach uses only ``classical'' (and deterministic) tools from the theory of PDEs, and is based on finding a Laplacian operator acting on vector fields, whose spectrum decomposes neatly into the nonzero eigenvalues of the Neumann and the Dirichlet Laplacians; its first eigenvalue is $\mu_2$ and its corresponding eigenfunction is, essentially, $\nabla \psi_2$. Studying positivity properties of this eigenfunction allows one to provide a simple proof of the results of \cite{AB04} on so-called ``lip domains'' (that is, domains enclosed by the graphs of two Lipschitz functions with Lipschitz constant at most one): on these domains, after a suitable rotation, as long as the domain is not a rectangle, $\mu_2$ is simple, and $\partial_1 \psi_2$ and $\partial_2\psi_2$ are positive functions, meaning, in particular, that $\psi_2$ attains its minimum and maximum only on the boundary, essentially at the corner points where the two aforementioned functions which constitute $\partial \Omega$ meet.

As noted above, our goal is to study the problem in higher dimensions, using an enhancement of the variational approach in \cite{Rprep}. While the conjecture still makes sense in any space dimension, to date, very few positive results seem to be known; except for an observation on cylindrical domains \cite{K85} to the best of our knowledge the only non-trivial domains for which the conjecture is known to be true are in \cite{CLW19,Y11}; the former uses a deformation technique to prove the conjecture for a specific generalization of the symmetric domains of Jerison and Kenig, while the latter applies Brownian motion techniques to a generalization of the lip domains of Atar and Burdzy (see Section~\ref{sec:lip} for more details).

We will push forward the variational approach of \cite{Rprep} to higher dimensions to obtain positive results for two classes of domains. The first, for which we will obtain the strongest possible form of the hot spots conjecture, contains the two-dimensional lip domains of \cite{AB04}  (modulo an additional mild regularity assumption), and the higher dimensional lip domains of \cite{Y11}, as special cases; we will argue (again, see Section~\ref{sec:lip}) that this new class of domains, while somewhat restrictive in higher dimensions, is nevertheless a ``natural'' choice for all the partial derivatives of the eigenfunction $\psi_2$ to be non-sign changing, due to the Neumann condition; cf.\ Remark \ref{rem:maximalClass}. The second class, for which we will obtain a slightly different result, for certain eigenfunctions of symmetric domains, contains the two-dimensional symmetric domains of \cite{JN00} as a special case and in particular provides a new proof of \cite[Theorem~1.4]{JN00}, up to slightly more restrictive assumptions on the regularity of the boundary.

In order to formulate our main results we start with a technical assumption.

\begin{definition}
\label{def:piecewise-smooth}
We say $\Omega \subset \R^d$, $d \geq 2$, is a \emph{piecewise smooth} domain if:
\begin{enumerate}
\item It is a bounded, open, connected set with Lipschitz boundary; and
\item there exists an exceptional set $\Sigma \subset \partial\Omega$, of zero $d-1$-dimensional Hausdorff measure, such that $\partial\Omega \setminus \Sigma$ consists of a finite number of connected components, relatively open in $\partial\Omega$, each of which is $C^\infty$ (that is, for any $z \in \partial\Omega \setminus \Sigma$, there exists an open neighborhood $U_z \ni z$ such that $\partial\Omega \cap U_z$ is a $C^\infty$-manifold).
\end{enumerate}
\end{definition}

By way of analogy with polyhedra, which are immediately seen to be special cases, for a piecewise smooth domain $\Omega$ we will refer to the (closures of the) connected components of $\partial\Omega \setminus \Sigma$ as the \emph{faces} of its boundary $\partial\Omega$. Points in $\partial \Omega \setminus \Sigma$ will occasionally be called {\em regular points}.

\begin{hypothesis}
\label{hyp:dom}
The domain $\Omega \subset \R^d$, $d \geq 2$, is piecewise smooth and satisfies a uniform exterior ball condition.%, for all $z \in \Gamma$, there exists an open neighbourhood $U_z \ni z$ such that $\Omega \cap U_z$ is a convex set.
\end{hypothesis}

Domains satisfying Hypothesis~\ref{hyp:dom} have the special property that the domain of the Neumann Laplacian on $L^2(\Omega)$ is contained in the Sobolev space $H^2(\Omega)$ of order two, %see Lemma~\ref{lem:piecewise-smooth}, 
as follows immediately from \cite[Introduction, example (ii) and Proposition~4.8]{AGMT10}. In addition to this requirement, we will make use of curvatures of the boundary at its regular points, which requires certain regularity. We note that slightly different hypotheses could be assumed, in particular, the regularity condition could certainly be weakened from $C^\infty$ to $C^3$.

\begin{definition}
\label{def:lip}
We will call a Lipschitz domain $\Omega \subset \R^d$, $d\geq 2$, a \emph{lip domain} if, possibly after a rotation, for almost all $x \in \partial\Omega$ the outer unit normal $\nu(x)$ to $\Omega$ at $x$ either has exactly two nonzero components and these have opposite signs, or $\nu(x) = \pm e_j$ for some standard basis vector $e_j$.
\end{definition}

By default we will always assume our domain to be rotated in this fashion, unless explicitly stated otherwise. We note again that if $d=2$, then rotating a lip domain in Definition \ref{def:lip} clockwise by an angle of $\pi/4$ leads to the original notion of two-dimensional lip domains as used in \cite{AB04, Rprep}. Our class of lip domains is also larger than the class considered in \cite{Y11}, see Remark~\ref{rem:Y11}.

\begin{theorem}
\label{thm:hot-spots-I}
Let $\Omega \subset \R^d$, $d \geq 2$, be a bounded lip domain satisfying Hypothesis~\ref{hyp:dom}, and let $\psi_2$ be any eigenfunction of the Neumann Laplacian on $\Omega$ corresponding to $\mu_2$. Then $\psi_2$ attains its maximum and minimum in $\overline{\Omega}$ only on $\partial \Omega$. Furthermore, if $\Omega$ is rotated as described in Definition~\ref{def:lip}, then in each coordinate direction, $\psi_2$ is either strictly monotonic or constant. More precisely, for each $i \in \{1, \dots, d\}$ one has $\partial_i \psi_2 > 0$ in $\Omega$, or $\partial_i \psi_2 < 0$ in $\Omega$, or $\partial_i \psi_2 = 0$ identically.
\end{theorem}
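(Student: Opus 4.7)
The plan is to follow the strategy of \cite{Rprep}: interpret $\mu_2$ variationally via a vector-valued Laplacian acting on $L^2(\Omega;\R^d)$, and extract the required rigidity from a componentwise absolute-value argument, for which Definition~\ref{def:lip} is engineered precisely to keep the boundary constraint invariant.

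First I would extend the 2D construction of \cite{Rprep} to arbitrary $d \geq 2$: a self-adjoint operator $A$ on $L^2(\Omega;\R^d)$ associated with the form
\[
a(u,v) = \int_\Omega \nabla u : \nabla v\,dx
\]
on a form domain $V$ contained in $\{u \in H^1(\Omega;\R^d) : \nu \cdot u = 0 \text{ on } \partial\Omega\}$, possibly cut down by further natural constraints (for instance, arising from the divergence/curl decomposition) so as to guarantee that $\min\sigma(A) = \mu_2$, with $\nabla\psi_2 \in V$ a Rayleigh-quotient minimizer for every Neumann eigenfunction $\psi_2$ at level $\mu_2$.

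The heart of the proof is then elementary. Set $u := \nabla\psi_2$ and $\tilde u := (|u_1|,\ldots,|u_d|)$. The Stampacchia chain rule gives $|u_i| \in H^1(\Omega)$ with $|\nabla |u_i||=|\nabla u_i|$ almost everywhere, so $a(\tilde u,\tilde u)=a(u,u)$ and $\|\tilde u\|_{L^2}=\|u\|_{L^2}$. The crucial verification is $\tilde u \in V$: at almost every $x \in \partial\Omega$, Definition~\ref{def:lip} leaves two possibilities. If $\nu(x) = \pm e_k$, then $\nu\cdot u = 0 \Longleftrightarrow u_k = 0 \Longleftrightarrow |u_k| = 0$. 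If instead $\nu(x)$ has exactly two nonzero components $\nu_i,\nu_j$ with $\nu_i\nu_j < 0$, then $\nu_i u_i + \nu_j u_j = 0$ forces $u_i$ and $u_j$ to share a common sign almost everywhere on that face, and hence $\nu_i|u_i|+\nu_j|u_j|=0$ as well.

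Thus $\tilde u$ also minimizes and is a first eigenvector of $A$. Interior elliptic regularity then upgrades each $|u_i|=|\partial_i\psi_2|$ to a smooth nonnegative solution of $-\Delta|u_i|=\mu_2|u_i|$ in $\Omega$, and applying the strong maximum principle to $-\Delta+c$ with $c>\mu_2$ yields the dichotomy $|u_i|\equiv 0$ or $|u_i|>0$ throughout $\Omega$. In the latter case, continuity of $\partial_i\psi_2$ and connectedness of $\Omega$ force $\partial_i\psi_2$ to have strict sign. Since $\mu_2>0$ excludes a constant $\psi_2$, at least one $\partial_i\psi_2$ is strictly signed, and the resulting strict monotonicity of $\psi_2$ in that direction precludes interior extrema, giving the full hot spots conclusion. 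I expect the main difficulty to lie in the first step — constructing $A$ in dimension $d \geq 3$ on a merely piecewise smooth boundary while retaining $\mu_2$ at the bottom of $\sigma(A)$ — which requires navigating the delicate $H^1$-theory of curl-curl operators on such domains (whence the paper's acknowledgement to Nicaise); once $A$ is available, the componentwise absolute-value argument is essentially a direct transcription of the 2D reasoning, with Definition~\ref{def:lip} perfectly tailored to preserve the boundary constraint.
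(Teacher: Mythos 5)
Your overall strategy matches the paper's, but the bilinear form you write down is not the right one, and this is not a cosmetic issue. With the pure Dirichlet form $a(u,v)=\int_\Omega \nabla u:\nabla v$ on the tangential space $V=\{u\in H^1(\Omega)^d:\langle u,\nu\rangle=0\}$, integration by parts leaves a residual boundary term $\sum_j\int_{\partial\Omega}\partial_\nu u_j\,\overline{v_j}$, and for $u=\nabla\psi_2$ this residual does \emph{not} vanish for general tangential $v$: differentiating $\langle\nabla\psi_2,\nu\rangle=0$ in a tangential direction produces curvature contributions. Consequently $\nabla\psi_2$ is not a critical point of your Rayleigh quotient, and the whole mechanism collapses unless $\partial\Omega$ is totally geodesic. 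The paper repairs this by taking the form
\[
\sa[u,v]=\sum_{j=1}^d\int_\Omega\langle\nabla u_j,\nabla v_j\rangle-\int_{\partial\Omega}\langle Lu,v\rangle,
\]
with $L$ the shape operator; showing that $\nabla\psi_2\in\dom A$ is then Lemma~\ref{lem:Weingarten} and Proposition~\ref{prop:a-decomposition}, and is genuinely computational. This omission propagates: your absolute-value step only needs $|\nabla|u_i||=|\nabla u_i|$ plus the tangential constraint being preserved, but with the corrected form one must \emph{also} show the curvature term is not increased, $\int_{\partial\Omega}\langle L|u|,|u|\rangle\ge\int_{\partial\Omega}\langle Lu,u\rangle$. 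That is where the geometry of lip domains really bites: one needs to know (Lemma~\ref{lem:normalComponents}) that near every regular boundary point the normal has support of size at most two and hence $L$ has rank at most one, after which the two eigenvector directions can be controlled by the lip structure. None of this appears in your sketch, and the sign-preservation of the curvature term is exactly the part that does not follow from ``$\nu\cdot u=0$ is invariant under componentwise modulus.''

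There is a second gap you acknowledge but do not fill: you assume $A$ can be ``cut down by further natural constraints'' so that $\min\sigma(A)=\mu_2$. The paper does not cut anything down. It proves $\eta_1(A)=\mu_2$ \emph{a posteriori}: having established (Step~2) that no component of a first eigenfield of $A$ changes sign, one shows (Step~3) that such a field cannot live in the divergence-free sector $\cH_2$ of the Helmholtz decomposition, by testing $\diver u=0$ against affine functions to get $\int_\Omega u_j=0$ and then invoking the no-sign-change property to force $u=0$. Since the other sector is spanned by $\{\nabla\psi_n/\sqrt{\mu_n}\}$ (Proposition~\ref{prop:a-decomposition}), $\eta_1=\mu_2$ follows. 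This argument is short but essential; without it the no-sign-change conclusion applies to the wrong object. In summary, you have the right skeleton and the right punchline, but two of the load-bearing steps — the curvature term in the form and the identification $\eta_1(A)=\mu_2$ — are missing.
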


Note that under these assumptions $\mu_2$ need not be simple (and it is possible that $\partial_i \psi_2 = 0$): beyond just rectangular prisms, product domains of the form $D \times I$ ($D \subset \R^{d-1}$ a lip domain, $I$ an interval) are included as special cases. We will discuss this, along with several other examples which are not of this form, in Section~\ref{sec:lip}.

\begin{remark}
The uniform exterior ball condition required in Hypothesis \ref{hyp:dom} is not just a technical assumption in Theorem \ref{thm:hot-spots-I}. In fact, already in dimension $d = 2$, any simply connected, not necessarily convex, polygon with all sides axioparallel is a lip domain with piecewise smooth boundary. However, it is not to be expected in general that on such domains $\psi_2$ is monotonic in every coordinate direction if, e.g., such $\Omega$ is ``spiral-shaped''. Our definition of lip domains without the exterior ball condition would include non-simply connected domains, where counterexamples to the hot spots conjecture in the spirit of \cite{BW99}, e.g.\ for polygons with axioparallel sides, could be constructed. We do not go into details.
\end{remark}

We next proceed to the case of symmetric domains.

\begin{theorem}
\label{thm:hot-spots-II}
Let $\Omega \subset \R^d$, $d \geq 2$, in addition to satisfying Hypothesis~\ref{hyp:dom}, have a reflection symmetry in every coordinate plane $\{x_i=0\}$, $i=1,\ldots,d$, and assume there exists at least one (open) orthant $\mathcal{O}$ such that $\Omega \cap \mathcal{O}$ is a lip domain in the sense of Definition~\ref{def:lip}. For any $j=1,\ldots,d$, up to scalar multiples, there is a unique eigenfunction $\psi_j$ associated with the smallest eigenvalue among all eigenfunctions which are reflection antisymmetric in the plane $\{x_j=0\}$ and symmetric in every other coordinate plane $\{x_i=0\}$, $i \neq j$. Moreover, for each $i \in \{1,\ldots,d\}$ we have the trichotomy $\partial_i \psi_j >0$ in $\Omega \cap \mathcal{O}$, or $\partial_i \psi_j < 0$ in $\Omega \cap \mathcal{O}$, or $\partial_i \psi_j = 0$ in $\Omega \cap \mathcal{O}$, and the maximum and minimum of $\psi_j$ in $\overline{\Omega}$ are attained only on $\partial\Omega$.
\end{theorem}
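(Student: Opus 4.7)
The plan is to use the $\ZZ_2^d$ reflection symmetry of $\Omega$ to reduce the problem to a mixed Dirichlet--Neumann boundary value problem on the lip domain $D := \Omega \cap \cO$, and then to apply the variational vector-valued Laplacian machinery underlying Theorem~\ref{thm:hot-spots-I}. Since $\Omega$ is symmetric in every coordinate plane, the Neumann Laplacian on $\Omega$ commutes with each reflection $x_i \mapsto -x_i$, so its spectrum decomposes according to the $2^d$ parity types. An eigenfunction which is antisymmetric in $\{x_j=0\}$ and symmetric in every $\{x_i=0\}$ for $i \neq j$ restricts to $D$ as an eigenfunction of the Laplacian on $D$ subject to the original Neumann condition on $\partial D \cap \partial\Omega$, a Dirichlet condition on $\partial D \cap \{x_j=0\}$ (forced by antisymmetry), and a Neumann condition on each $\partial D \cap \{x_i=0\}$ for $i \neq j$. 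Conversely, any eigenfunction of this mixed problem on $D$ extends by reflection to an eigenfunction of the Neumann Laplacian on $\Omega$ with the prescribed parities. Thus the eigenvalue appearing in the theorem is precisely the smallest eigenvalue of this mixed problem on $D$, and uniqueness of $\psi_j$ up to scaling becomes simplicity of that lowest mixed eigenvalue.

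Next I would introduce on the lip domain $D$ an analogue of the vector-valued Laplacian from \cite{Rprep}, adapted to the present mixed conditions: its vector fields should have vanishing normal component on $\partial D \cap \partial\Omega$ and on each $\partial D \cap \{x_i=0\}$ with $i \neq j$, and vanishing tangential component on $\partial D \cap \{x_j=0\}$. This is the correct choice for $\nabla \psi_j$ to lie in the form domain: on a Neumann face one has $\nabla \psi_j \cdot \nu = \partial_\nu \psi_j = 0$, while on the Dirichlet face $\{x_j=0\} \cap \partial D$, $\psi_j$ vanishes, so $\nabla \psi_j$ is purely normal there. With these conditions the spectrum of the vector-valued operator contains the nonzero eigenvalues of the above mixed scalar problem, with corresponding eigenfield $\nabla \psi_j$. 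Because $D$ is a lip domain, the curvature bookkeeping at the regular points of $\partial D \cap \partial\Omega$ carried out in the proof of Theorem~\ref{thm:hot-spots-I} applies verbatim, and the coordinate planes contribute nonnegatively to the relevant boundary integrals since they are flat. A Perron--Frobenius-type argument then yields that the lowest eigenvalue is simple with eigenfield whose components each have constant sign in $D$; identifying this eigenfield with $\nabla \psi_j$ gives both the simplicity of $\psi_j$ and the asserted trichotomy for each $\partial_i \psi_j$ on $\Omega \cap \cO$.

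The hard part will be handling the edges of $\partial D$ where a face of $\partial\Omega$ meets a coordinate plane $\{x_i=0\}$: two distinct boundary conditions must be reconciled there in a regularity-consistent manner, and the uniform exterior ball condition in Hypothesis~\ref{hyp:dom} is what provides the $H^2$ regularity and the form domain identifications needed at these edges. One also has to verify that the portion of $\partial D$ lying in a coordinate plane does not cause the second fundamental form of $\partial D$ to pick up a negative contribution in the boundary integral of the quadratic form, which should follow from flatness together with the fact that the lip condition on $\Omega$ is inherited by $D$. Once this is in place, the conclusion on $\overline{\Omega}$ follows by a reflection argument: antisymmetry in $\{x_j=0\}$ forces $\psi_j \equiv 0$ on $\overline{\Omega} \cap \{x_j=0\}$, while full symmetry in the remaining coordinate planes means every extremum of $\psi_j$ in $\overline{\Omega}$ is attained, up to sign, in some closed orthant, and hence in $\overline{D}$. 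The trichotomy on $D$ then forces this extremum to lie on $\partial D \cap \partial\Omega \subset \partial\Omega$, as claimed.
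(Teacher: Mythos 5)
Your reduction to a mixed Dirichlet--Neumann problem on the orthant $D = \Omega \cap \cO$ and the appeal to the vector-valued Laplacian machinery there matches the paper's strategy; indeed, the paper invokes Theorem~\ref{thm:hot-spots-III} directly on $D$ (with $\Gamma_{\rm D} = \partial D \cap \{x_j = 0\}$ and Neumann everywhere else), rather than re-deriving the form-domain considerations, but this is merely organizational. Likewise the uniqueness via Perron--Frobenius on the half-domain or the orthant is standard and correct.

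The genuine gap is in your last sentence: ``The trichotomy on $D$ then forces this extremum to lie on $\partial D \cap \partial\Omega \subset \partial\Omega$.'' This does not follow. The trichotomy is a statement about the \emph{open} orthant $D$, whereas $\overline{D}$ also contains the pieces $\overline{D} \cap \{x_i = 0\} \cap \Omega$ for $i \neq j$, which lie in the \emph{interior} of $\Omega$, and you have not excluded an extremum there. Concretely, by symmetry $\partial_i \psi_j$ vanishes identically on $\{x_i = 0\}$, so the sign condition on $\partial_i \psi_j$ in $D$ gives no information at such a point; and while the trichotomy forces $\partial_j \psi_j > 0$ in the interior of each orthant of $\Omega_j^+$, continuity only yields $\partial_j \psi_j \geq 0$ on the coordinate hyperplanes, so $\partial_j \psi_j$ could a priori vanish at an interior point $p \in \{x_i = 0\} \cap \Omega_j^+$, making $p$ a critical point and hence a candidate extremum. (The disk already exhibits $\partial_1 \psi_2 < 0$ in the first quadrant, so one cannot simply arrange all derivatives to be strictly positive in $D$ and push the maximum to a ``far corner''; the non-boundary hyperplane faces really must be handled.) The paper devotes a separate argument to close this: assuming a maximum at $p := (0, p_2, \ldots, p_d) \in \{x_1 = 0\} \cap \Omega_j^+$, it shows $\partial_j \psi_j \geq 0$ forces $\psi_j$ to be constant on the segment from $p$ to $\partial\Omega$ in the $+x_j$ direction, and then the analyticity of $\partial_j \psi_j$, together with its strict positivity near $\{x_j = 0\}$ on that line, yields a contradiction. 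You would need to supply this (or an alternative, e.g.\ noting that $\partial_j \psi_j \geq 0$ satisfies $\Delta (\partial_j\psi_j) = -\mu^j \partial_j\psi_j \leq 0$, so it is superharmonic in $\Omega_j^+$ and an interior zero would force it to vanish identically by the strong minimum principle) before the conclusion about $\overline{\Omega}$ is justified.
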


As mentioned above, this theorem includes the domains of \cite{JN00} as a special case when $d=2$, up to boundary regularity. Indeed, in dimension two, under Hypothesis \ref{hyp:dom} the assumption that $\Omega \cap \cO$ is a lip domain for some orthant $\cO$ is equivalent to the assumption of \cite[Theorem 1.1]{JN00} that all the vertical and horizontal cross sections of $\Omega$ are intervals. Moreover, in dimension two a simple argument using nodal domain counts shows that, up to the right choice of basis, any eigenfunction associated with $\mu_2$ has the antisymmetry/symmetry properties demanded by the theorem; thus the theorem is applicable and implies the hot spots conjecture in this case. We do not expect this to be true in general for $d \geq 3$.  Our class partly overlaps with the class considered in \cite{CLW19}, but in general will be distinct from it, see Remark~\ref{rem:CLW19}.

In dimension $3$, a simple example of a domain satisfying the assumptions of Theorem~\ref{thm:hot-spots-II} but not Theorem~\ref{thm:hot-spots-I} is any octahedron with a rectangular base, whose two apices sit directly above (or below) the center of the base.

Note that in general the trichotomy for $\partial_i\psi_j$ in the theorem will hold orthant-wise, except for $\partial_j\psi_j$, which will be strictly monotonic in $\Omega$ due to the antisymmetry and the fact that no nonzero Laplacian eigenfunction can satisfy $\psi_j = \partial_j\psi_j = 0$ identically on the plane $\{x_j=0\}$ (by unique continuation). The disk is already an example: if we choose the eigenfunction $\psi_2$ to be zero in the plane $\{x_2 = 0\}$ and positive in the upper half-plane $\{x_2 > 0\}$, then $\partial_1 \psi_2$ will be negative in the first and third quadrants, and positive in the second and fourth quadrants.

We will obtain both theorems as consequences of a slightly more general result, namely for the Laplacian with Neumann conditions except possibly on a special face, which may have Dirichlet boundary conditions.

\begin{theorem}
\label{thm:hot-spots-III}
Let $\Omega \subset \R^d$, $d \geq 2$, be a bounded lip domain satisfying Hypothesis~\ref{hyp:dom}. We assume the following boundary conditions on $\partial\Omega$:
\begin{enumerate}
\item[(i)] Dirichlet boundary conditions on \emph{at most} one face $\Gamma_{\rm D}$ of $\partial\Omega$, as long as the face is flat and perpendicular to a coordinate axis;
\item[(ii)] Neumann boundary conditions on all other faces of $\partial\Omega$, whose union we denote by $\Gamma_{\rm N}$.
\end{enumerate}
Let $\lambda>0$ be the smallest nonzero eigenvalue of the Laplacian with the above boundary conditions, and let $\psi$ be any associated eigenfunction. Then, for each $i \in \{1,\ldots,d\}$, we have the trichotomy $\partial_i \psi >0$ in $\Omega$, or $\partial_i \psi < 0$ in $\Omega$, or $\partial_i \psi = 0$ identically in $\Omega$.
\end{theorem}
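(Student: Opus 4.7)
The strategy is to adapt the vector-valued variational framework of \cite{Rprep} (as sketched in the introduction) to the mixed-boundary setting. I would first define a self-adjoint vector Laplacian $L$ on $L^2(\Omega;\R^d)$, associated with a bilinear form of the shape
\[
\mathfrak{a}(u,v)=\int_\Omega \bigl((\diver u)(\diver v)+(\curl u)\cdot(\curl v)\bigr)\,dx,
\]
acting on a subspace $V\subset H^1(\Omega;\R^d)$ chosen so that $\nabla\psi\in V$ for every eigenfunction $\psi$ of the mixed scalar problem. The relevant boundary conditions on $V$ are $\nu\cdot u=0$ on the Neumann face $\Gamma_{\rm N}$ (reflecting $\partial_\nu\psi=0$) and, on the flat Dirichlet face $\Gamma_{\rm D}\subset\{x_j=\mathrm{const.}\}$, vanishing of all tangential components $u_i$ for $i\neq j$ (reflecting the vanishing of the tangential derivatives of $\psi$). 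Hypothesis~\ref{hyp:dom} yields $\psi\in H^2(\Omega)$, so that $u=\nabla\psi\in V$; integration by parts with $-\Delta=-\nabla\diver+\curl\curl$ gives $Lu=\lambda u$. As in \cite{Rprep}, a spectral decomposition argument shows that every eigenvalue of $L$ is either a nonzero eigenvalue of the scalar mixed problem (with eigenfunction of the form $\nabla\varphi$) or an eigenvalue of a companion divergence-free operator bounded below by $\lambda$, so that $\lambda=\min\{\mathfrak{a}(v,v)/\|v\|^2 : v\in V\setminus\{0\}\}$ and $\nabla\psi$ is a corresponding minimizer.

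The central step is a symmetrization lemma: for $u=(u_1,\dots,u_d)\in V$, the componentwise absolute value $\tilde u=(|u_1|,\dots,|u_d|)$ also lies in $V$, and $\mathfrak{a}(\tilde u,\tilde u)\le\mathfrak{a}(u,u)$ while $\|\tilde u\|=\|u\|$. Preservation of the boundary constraint is where the lip-domain hypothesis enters decisively. At a.e.\ regular point of $\Gamma_{\rm N}$ either $\nu=\pm e_j$, in which case $\nu\cdot u=0$ becomes $u_j=0$ and trivially $|u_j|=0$; or $\nu$ has exactly two nonzero components $\nu_k,\nu_l$ of opposite signs, and then $\nu_k u_k+\nu_l u_l=0$ forces $u_k$ and $u_l$ to have a common sign pointwise, whence $\nu_k|u_k|+\nu_l|u_l|=\pm(\nu_k u_k+\nu_l u_l)=0$. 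The tangential Dirichlet conditions $u_i=0$ on $\Gamma_{\rm D}$ are preserved by absolute values trivially. For the energy inequality, the componentwise $H^1$ chain rule $\partial_i|u_j|=\mathrm{sgn}(u_j)\partial_i u_j$ allows a pointwise comparison: the diagonal squared contributions to $(\diver u)^2$ and $|\curl u|^2$ are invariant, while each off-diagonal cross term $\partial_i u_j\cdot\partial_k u_l$ can only have its sign improved under the sign-flipping, giving $\mathfrak{a}(\tilde u,\tilde u)\le\mathfrak{a}(u,u)$.

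Combining the two ingredients, $\tilde u=(|\partial_1\psi|,\dots,|\partial_d\psi|)$ is itself a first eigenfunction of $L$. Since $L$ acts componentwise as $-\Delta$, each $|\partial_i\psi|$ is a nonnegative weak solution of $-\Delta f=\lambda f$ in $\Omega$. By interior elliptic regularity and the strong maximum principle (or Harnack's inequality) applied to this nonnegative solution, either $|\partial_i\psi|\equiv 0$ in $\Omega$ or $|\partial_i\psi|>0$ throughout $\Omega$. In the latter case $\partial_i\psi$ is continuous with no zeros on a connected open set, hence has a strict sign, yielding the trichotomy.

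The principal obstacle is the symmetrization lemma itself, both in verifying that $\tilde u$ lies in $V$ with the correct boundary traces (requiring the $H^2$-regularity of $\psi$ and a careful analysis of the trace map near the edges in the exceptional set $\Sigma$ where faces meet, where the uniform exterior ball condition is essential) and in the bulk energy comparison, whose off-diagonal sign bookkeeping must be executed cleanly. A secondary subtlety is the precise identification of the spectrum of $L$ and the Rayleigh-quotient characterization of $\lambda$ in the mixed-boundary setting, since the companion divergence-free operator must be set up to interact correctly with the tangential constraint on $\Gamma_{\rm D}$.
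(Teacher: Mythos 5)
Your overall architecture follows the paper: a vector Laplacian whose form domain is tangential on $\Gamma_{\rm N}$ and normal on $\Gamma_{\rm D}$, a symmetrization (componentwise absolute value) step justified by the lip structure, and a maximum-principle/unique-continuation finish. The preservation of the constraints under $u\mapsto\tilde u=(|u_1|,\ldots,|u_d|)$ via the two-nonzero-components-of-opposite-sign case analysis is exactly the right use of the lip hypothesis. However, there is a genuine gap in the central step, the energy comparison $\mathfrak{a}(\tilde u,\tilde u)\le\mathfrak{a}(u,u)$ for your bilinear form $\mathfrak{a}(u,v)=\int_\Omega(\diver u)(\diver v)+(\curl u)\cdot(\curl v)$.

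Your claimed mechanism — ``each off-diagonal cross term $\partial_i u_j\cdot\partial_k u_l$ can only have its sign improved under the sign-flipping'' — is not correct, and a proof cannot proceed along these lines. Writing $s_j=\operatorname{sgn}(u_j)$, the difference of the integrands is
\[
(\diver\tilde u)^2+|\curl\tilde u|^2 - (\diver u)^2-|\curl u|^2
=\sum_{i\neq j}(s_is_j-1)\bigl(\partial_iu_i\,\partial_ju_j-\partial_iu_j\,\partial_ju_i\bigr),
\]
which has \emph{no} definite sign pointwise: the quantity $\partial_iu_i\,\partial_ju_j-\partial_iu_j\,\partial_ju_i$ is itself sign-indefinite, and multiplying it by a factor from $\{0,-2\}$ does nothing useful. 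Nor does it have a definite sign after integration, because the prefactor $(s_is_j-1)$ is a non-constant function and so one cannot integrate the cross terms by parts into a boundary contribution. The standard identity that would turn $\int_\Omega\sum_{i\neq j}(\partial_iu_i\,\partial_ju_j-\partial_iu_j\,\partial_ju_i)$ into a boundary integral involving the second fundamental form holds only \emph{without} the $s_is_j$ prefactor, and only for $u$ satisfying the tangential boundary condition. What the paper actually does is formulate the form as $\sa[u,u]=\sum_j\int_\Omega|\nabla u_j|^2-\int_{\partial\Omega}\langle Lu,u\rangle$, for which the bulk part is exactly invariant under $u\mapsto\tilde u$ (since $|\nabla|u_j||=|\nabla u_j|$ a.e.), and then reduces the whole comparison to a \emph{pointwise} comparison on $\partial\Omega$ of $\langle L_p\tilde u(p),\tilde u(p)\rangle$ against $\langle L_pu(p),u(p)\rangle$. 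That boundary comparison is the crux; it needs a local geometric lemma (the paper's Lemma~\ref{lem:normalComponents}) showing that at a regular point $p_0$ of a lip domain, $\nu$ has at most two nonzero components in a whole neighborhood, so that $L_{p_0}$ has at most one nontrivial eigendirection — spanned by the (sign-controlled) tangent vector $(0,\dots,0,-\alpha,\beta)$ — from which equality of the two boundary expressions follows. Your proposal has no substitute for this local analysis of the shape operator; the lip hypothesis must be used again, in a second and sharper way than the sublattice argument for $V$-membership, and without it the symmetrization lemma is unproved.

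Two smaller points. First, you should also note that taking componentwise absolute values preserves the normal constraint on $\Gamma_{\rm D}$ precisely because $\Gamma_{\rm D}$ is flat and perpendicular to a coordinate axis — your formulation with $u_i=0$, $i\neq j$, is equivalent to the paper's, but that feature of the Dirichlet face is essential and should be flagged. Second, to deduce $\lambda=\eta_1$ (and $\lambda>0$) you need to rule out the possibility that the first $A$-eigenfunction lies entirely in the divergence-free complement; the paper does this by showing that any such first eigenfunction has componentwise fixed sign (by symmetrization plus unique continuation), which forces it to vanish by a moment computation using $\diver u=0$ and the boundary conditions. Your sketch of the ``spectral decomposition argument'' glides over this; it is not automatic that the divergence-free part is bounded below by $\lambda$.
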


Most of the paper will be devoted to the proof of Theorem~\ref{thm:hot-spots-III}. As mentioned above, the method of proof is based on studying a vectorial Laplacian which contains all the Neumann Laplacian eigenvalues, in the spirit of \cite{Rprep}; this is the subject of Section~\ref{sec:operator}. This operator is also the subject of Section~\ref{sec:curl-curl}, where its particular form in dimension three is studied and used to obtain an inequality between the smallest nontrivial eigenvalue of the Laplacian in Theorem \ref{thm:hot-spots-III} and the smallest eigenvalue of a curl curl operator, previously obtained for a different class of domains in \cite{P15,Z18}.

The actual proof of Theorem \ref{thm:hot-spots-III} will be given in Section~\ref{sec:proof}. Theorem~\ref{thm:hot-spots-I} is already contained in Theorem~\ref{thm:hot-spots-III}, while Theorem~\ref{thm:hot-spots-II} will follow from a rather standard argument using the symmetry properties of eigenfunctions of symmetric domains, given in Section~\ref{sec:symm}.

\section{On lip domains in dimension $d$}
\label{sec:lip}

We start by illustrating what kinds of domains are covered by Theorem~\ref{thm:hot-spots-I} based on a few examples and classes of examples, as well as a comparison to other kinds of lip domains considered in the literature, in both higher dimensions \cite{Y11} and two dimensions \cite{AB04,Rprep}.

We start by observing that any cylindrical domain of the form $D \times I$, where $D \subset \R^{d-1}$ is a lip domain, is also a lip domain.

\begin{example}
Suppose $D \subset \R^{d-1}$ is a lip domain in the sense of Definition~\ref{def:lip} and $I \subset \R$ is a bounded interval. Then $\Omega := D \times I \subset \R^{d}$ is a lip domain in $\R^d$. To see this, first observe that $\partial\Omega$ decomposes naturally as $\partial\Omega = (\partial D \times \overline{I}) \cup (\overline{D} \times \partial I)$. Clearly the two faces $\overline{D} \times \partial I$ are perpendicular to the $x_d$-axis, with $\nu(x) = \pm e_d$ there. On $\partial D \times \overline{I}$, by assumption on $D$, up to a rotation, the normal vector will have the form $(\tilde\nu,0)^\top$, where $\tilde\nu$ is a $d-1$-dimensional vector which is, by assumption, either some $\pm e_j$ for $j\in \{1,\ldots,d-1\}$, or has exactly two nonzero components, of opposite signs. Thus the same is true of $\nu$, and $\Omega$ is a lip domain.

More generally, if $\Omega_1 \subset \R^{d_1}$ and $\Omega_2 \subset \R^{d_2}$ are lip domains, then $\Omega_1 \times \Omega_2 \subset \R^{d_1 + d_2}$ is as well, by the same reasoning. We also observe that if $\Omega_1$ and $\Omega_2$ satisfy Hypothesis~\ref{hyp:dom}, then clearly $\Omega = \Omega_1 \times \Omega_2$ does too.
\end{example}

We next give a prototypical example of a domain which is not such a cylinder; for comprehensibility, we restrict to three dimensions.

\begin{example}
\label{example:double-triangular-prism}
A simple example of a polyhedral lip domain in $\R^3$ which is not a cylinder is as follows. We take two differently oriented triangular prisms,
\begin{displaymath}
\begin{aligned}
	T_x &:= \{ (x,y,z) \in \R^3: 0 \leq x \leq 1,\, 0 \leq y \leq 1,\, 0 \leq z \leq x\},\\
	T_y &:= \{ (x,y,z) \in \R^3: 0 \leq x \leq 1,\, 0 \leq y \leq 1,\, y-1 \leq z \leq 0\},
\end{aligned}
\end{displaymath}
so that $T_x \cap T_y$ is the square with vertices at $(0,0,0)$, $(1,0,0)$, $(0,1,0)$, $(1,1,0)$ in the $xy$-plane.
\begin{figure}[ht]
\tdplotsetmaincoords{100}{55}
\begin{tikzpicture}[scale=2,line join=bevel,tdplot_main_coords]%,z={(0.2,0.4)}]
\coordinate (P1) at (0,0,0);
\coordinate (P2) at (1,0,0);
\coordinate (P3) at (1,1,0);
\coordinate (P4) at (0,1,0);
\coordinate (U1) at (1,0,1);
\coordinate (U2) at (1,1,1);
\coordinate (L1) at (0,0,-1);
\coordinate (L2) at (1,0,-1);

%\draw[thick] (P1) -- (P2) -- (P3) -- (P4) -- cycle;
%\draw[thick] (P1) -- (P2) -- (U1) -- cycle;
\draw[thick] (P3) -- (U2) -- (P4) -- cycle;
%\draw[thick] (P2) -- (P3) -- (U2) -- (U1) -- cycle;
\draw[thick] (P1) -- (P4) -- (U2) -- (U1) -- cycle;

%\draw[thick] (P1) -- (P2) -- (L2) -- (L1) -- cycle;
\draw[thick] (L1) -- (L2) -- (P3) -- (P4) -- cycle;
\draw[thick] (P1) -- (P4) -- (L1) -- cycle;
%\draw[thick] (P2) -- (P3) -- (L2) -- cycle;

\draw[thick,dashed] (L1) -- (P1) -- (U1) -- (L2) -- cycle;
% \draw[thick] (L2) -- (U1) -- (U2) -- (P3) -- cycle;

%\draw[fill opacity=0.6,fill=purple!30!white] (L1) -- (P1) -- (U1) -- (L2) -- cycle;
% \draw[fill opacity=0.6,fill=yellow!70!black] (U1) -- (U2) -- (P3) -- (L2) -- cycle; auskommentiert von Jonathan
\draw[fill opacity=0.6,fill=blue!70!black] (P1) -- (P4) -- (U2) -- (U1) -- cycle;
\draw[fill opacity=0.6,fill=orange!80!black] (U2) -- (P3) -- (P4) -- cycle;
\draw[fill opacity=0.6,fill=green!50!black] (L1) -- (L2) -- (P3) -- (P4) -- cycle;
\draw[fill opacity=0.6,fill=purple!70!black] (P1) -- (P4) -- (L1) -- cycle;

\draw[thick,->] (-0.8,-0.8,0) -- (-0.8,-0.8,0.5);
\node at (-0.8,-0.8,0.5) [anchor=south] {$z$};
\draw[thick,->] (-0.8,-0.8,0) -- (-0.3,-0.8,0);
\node at (-0.3,-0.8,0) [anchor=south] {$x$};
\draw[thick,->] (-0.8,-0.8,0) -- (-0.8,-0.3,0);
\node at (-0.8,-0.3,0) [anchor=north] {$y$};
% \draw[thick,white] (U1) -- (L2);
% \draw[thin,dashed] (U1) -- (L2);
\end{tikzpicture}
\caption{The domain $\Omega$. The upper triangular prism is $T_x$, the lower is $T_y$.}
\label{fig:3dLip}
\end{figure}
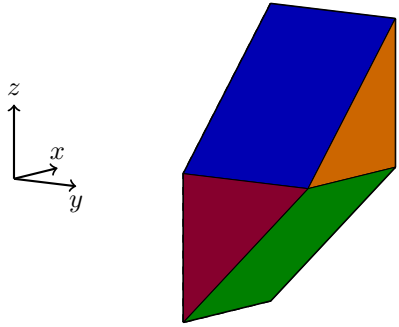
Denoting by $\Omega$ the interior of $T_x \cup T_y$, see Figure \ref{fig:3dLip}, we see that $\Omega$ is a polyhedron with six faces (four quadrilaterals and two triangles) with four oriented parallel to two axes each and the remaining two, slanted, having outer normal vectors pointing towards $(-1,0,1)$ (in the case of the upper face, part of $T_x$) and, respectively, $(0,1,-1)$ (in the case of the lower face, part of $T_y$). Thus $\Omega$ is a lip domain (and obviously satisfies Hypothesis~\ref{hyp:dom}); but it is clearly not the cross product of an interval with a fixed two-dimensional domain.

One could easily construct variants, for example, by modifying the slope of the two slanting faces, or by shifting $T_x$ upwards, say, to become
\begin{displaymath}
	T_{x,c} := \{ (x,y,z) \in \R^3: 0 \leq x \leq 1,\, 0 \leq y \leq 1,\, c \leq z \leq x\}
\end{displaymath}
for some given $c>0$ and inserting a rectangular prism of the form
\begin{displaymath}
	P_c := \{(x,y,z) \in \R^3: 0 \leq x \leq 1,\, 0 \leq y \leq 1,\, 0 \leq z \leq c\};
\end{displaymath}
then if $\Omega_c$ is the (interior of) $T_{x,c} \cup P_c \cup T_y$, it will still be a non-cylindrical polyhedral lip domain as before.

A further variant can be constructed by truncating $\Omega$ along a vertical plane of the form $\{x-y+C=0\}$ for a suitable constant $C$, to create a domain $\Omega_C$ which now has a face with normal vector pointing towards $(-1,1,0)$. One could alternatively take a smooth surface whose normal points into the same octant in place of the plane.
\end{example}

\begin{remark}
\label{rem:Y11}
In \cite{Y11}, Yang gives what is possibly the first proof of the hot spots conjecture for a class of (non-cylindrical) domains in $\R^d$, which in \cite{Y11} are called lip domains, as they reduce to the lip domains of \cite{AB04} when $d=2$. This class is, however, significantly smaller than the class covered by Definition~\ref{def:lip} and Hypothesis~\ref{hyp:dom} (although Yang only insists that the domains be piecewise-$C^1$ rather than piecewise-$C^\infty$); essentially, in \cite{Y11} there exists a ``distinguished'' direction $x_1$, so that the domains consist of $2d$ faces, two of which have normal pointing in $\pm e_1$, while on the other faces the normal vector should always have nonzero $x_1$-component, and one other nonzero component (with opposite sign).

More precisely, in the notation of \cite[Lemma~2.1]{Y11}, the cases (1) and (2) are exactly $\pm e_1$; while in case (3), the normal vector is of the form $F_{i,1}^-(x_1)e_1 - F_{i,2}^- (x_1)e_i$ for nonnegative functions $F_{i,1}^-$ and $F_{i,2}^-$ of $x_1$ (since $f_i'$ does not change sign), and in case (4), the vector is now $-F_{i,1}^+(x_1)e_1 + F_{i,2}^+ (x_1)e_i$ for nonnegative $F_{i,1}^+$ and $F_{i,2}^+$. Thus, up to the question of regularity, each domain in \cite{Y11} is a lip domain in our sense.

On the other hand, the domains $\Omega_C$ from Example~\ref{example:double-triangular-prism} cannot fit into the framework of \cite{Y11}, as they can have up to seven affine faces, while those in \cite{Y11} necessarily have six (smooth) faces.
\end{remark}

However, if $d=2$, it is immediate that (up to the regularity assumption imposed on the faces) our definition reduces to the one in \cite{AB04}, see the introduction to \cite{Rprep}.

\begin{remark}
\label{rem:CLW19}
In \cite{CLW19}, the only other paper of which we are aware where the hot spots conjecture is proved for a non-trivial family of domains in higher dimensions, the domains considered generalize those of \cite{JN00} in a particular way to higher dimensions. Our Theorem~\ref{thm:hot-spots-II} provides a partly overlapping but distinct generalization: for example, both cover regular octahedra in $\R^3$. But while \cite{CLW19} requires additional rotation symmetries which we do not (their condition (2)), our lip domain condition imposes a stronger restriction on the possible form of the smooth faces than their condition (3).
\end{remark}

\begin{remark}
\label{rem:maximalClass}
Our definition of a multidimensional lip domain appears natural in the context of the hot spots problem. It is tailor-made for the proof strategy suggested first in the case $d = 2$ in \cite{R24prep}, cf.\ Lemma \ref{lem:sublattice}, and covers a very broad class of domains for which it is possible that an eigenfunction $\psi$ of the Neumann Laplacian can  have all its partial derivatives non-sign changing. Roughly speaking, if $\partial \Omega$ e.g.\ contains regular points $p, q$ with 
\begin{align*}
 \nu (p) = (\alpha (p), - \beta (p), 0, \dots, 0)^\top \quad \text{and} \quad \nu (q) = (\alpha (q), \beta (q), 0, \dots, 0)^\top
\end{align*}
with positive $\alpha (p), \alpha (q), \beta (p), \beta (q)$, then the Neumann boundary conditions
\begin{align*}
 \alpha (p) \partial_1 \psi (p) + \beta (p) \partial_2 \psi (p) = 0
\end{align*}
and 
\begin{align*}
 \alpha (q) \partial_1 \psi (q) + \beta (q) \partial_2 \psi (q) = 0
\end{align*}
imply that one of $\partial_1 \psi, \partial_2 \psi$ should change sign on $\partial \Omega$ and, hence, inside $\Omega$.
\end{remark}

\section{An auxiliary operator}
\label{sec:operator}

In this section we define an auxiliary operator which acts as the Laplacian on vector fields and has the gradients of Laplacian eigenfunctions (with Neumann or certain mixed boundary conditions) among its eigenfields. Its definition requires a little bit of geometry.

Let $\Omega \subset \R^d$ satisfy Hypothesis~\ref{hyp:dom}; we assume that $\partial\Omega = \Gamma_{\rm N} \cup \Gamma_{\rm D}$, where $\Gamma_{\rm N}$, $\Gamma_{\rm D}$ are closed, connected subsets of $\partial \Omega$ whose intersection has zero $(d-1)$-dimensional Hausdorff measure. Note that $\Gamma_{\rm D} = \emptyset$ is allowed.

We first consider the form domain
\begin{align}\label{eq:H}
 V := \left\{ u \in H^1 (\Omega)^d : \langle u |_{\partial \Omega}, \nu \rangle = 0~\text{on}~\Gamma_{\rm N},\,
	|\langle u|_{\partial\Omega},\nu\rangle| = |u |_{\partial \Omega}|~\text{on}~\Gamma_{\rm D} \right\},
\end{align}
where here and throughout $\langle \cdot, \cdot \rangle$ denotes the inner product in $\C^d$, applied pointwise almost everywhere. That is, we consider vector fields with components in the Sobolev space $H^1 (\Omega)$ whose trace is normal to the boundary almost everywhere on $\Gamma_{\rm D}$, and tangential almost everywhere on $\Gamma_{\rm N}$. Our prototype will be $u = \nabla \psi$, where $\psi$ is any (non-constant) eigenfunction of the Laplacian with Neumann conditions on $\Gamma_{\rm N}$ and Dirichlet conditions on $\Gamma_{\rm D}$; note that Hypothesis~\ref{hyp:dom} guarantees that in fact $\nabla \psi \in H^1(\Omega)^d$, which is not true on an arbitrary Lipschitz domain.

We will introduce the operator via the associated sesquilinear form. To this end, we start by noting that if $\Omega$ is piecewise smooth in the sense of Definition~\ref{def:piecewise-smooth}, then at almost every point $p \in \partial \Omega$ there exists a shape operator $L = L_p$ that, in a local parametrization $f : \R^{d - 1} \supset U_j \to \R^d$ with $U_j$ open and $f (x) = p$, can be defined by the relations
\begin{align*}
 L_p \partial_j f (x) = - \partial_j \nu (x), \quad j = 1, \dots, d - 1,
\end{align*}
as a linear, symmetric map on the tangential space at the point $f (p)$, where we identify the normal vector field $\nu$ on $\partial \Omega$ with the corresponding Gaussian map given locally by $\nu \circ f$. For our purposes it will be convenient to consider $L_p$ as a linear mapping $\R^d \to \R^d$, which we do through the trivial extension in the normal direction, that is, $L_p \nu (x) = 0$.

We can now define
\begin{equation*}
% \label{eq:form}
 \sa [u, v] = \sum_{j = 1}^d \int_\Omega \langle \nabla u_j, \nabla v_j\rangle - \int_{\partial \Omega} \langle L u, v \rangle, \quad \dom \sa = V;
\end{equation*}
here and henceforth all integration over $\Omega$ is with respect to the $d$-dimensional Lebesgue measure and integration over $\partial \Omega$ is with respect to the $d-1$-dimensional Hausdorff measure. We point out that the matrix function $p \mapsto L_p$, defined almost everywhere on $\partial \Omega$, is bounded and, hence, $\sa$ is well-defined.% Note also that on any affine face of $\partial\Omega$ (in particular on $\Gamma_{\rm D}$, if $\Gamma_{\rm D} \neq \emptyset$), $L_p$ is zero.

We then denote by $A$ the associated operator on $L^2(\Omega)^d$, that is,
\begin{displaymath}
\begin{aligned}
	\dom A &= \left\{ u \in V: \text{there exists } w \in L^2(\Omega)^d \text{ s.t.\ } \sa [u,v] = \int_\Omega \langle w, v \rangle, v \in V \right\},\\
	Au &= w,
\end{aligned}
\end{displaymath}
and by $\eta_k = \eta_k (A)$, $k = 1, 2, \dots$, its ordered eigenvalues, counted with their multiplicities, which makes sense due to the following result.

\begin{proposition}
\label{prop:A-general}
Let $\Omega \subset \R^d$ be a bounded, connected Lipschitz domain with piecewise smooth boundary. Then $A$ is a self-adjoint operator on $L^2(\Omega)^d$ with compact resolvent. Its spectrum consists of a discrete sequence of real eigenvalues bounded from below.
\end{proposition}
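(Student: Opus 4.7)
The plan is to apply the representation theorem for semibounded, densely defined, closed, symmetric sesquilinear forms, and then deduce compactness of the resolvent from the Rellich--Kondrachov theorem. I will proceed in four steps.

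First, I would check that $V$ is a closed subspace of $H^1(\Omega)^d$. The two pointwise boundary conditions appearing in \eqref{eq:H} are linear constraints on the trace $u|_{\partial\Omega}$, which depends continuously on $u \in H^1(\Omega)^d$ with values in $L^2(\partial\Omega)^d$. Since the unit normal $\nu$ is a fixed bounded measurable vector field on $\partial\Omega$, the maps $u \mapsto \langle u|_{\partial\Omega},\nu\rangle\restr{\Gamma_{\rm N}}$ and $u \mapsto (u|_{\partial\Omega} - \langle u|_{\partial\Omega},\nu\rangle\nu)\restr{\Gamma_{\rm D}}$ are bounded from $H^1(\Omega)^d$ into $L^2$ spaces on the corresponding pieces of the boundary, so $V$ is the intersection of the kernels of two continuous linear maps, hence closed. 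Density of $V$ in $L^2(\Omega)^d$ is clear since $C_c^\infty(\Omega)^d \subset V$.

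Next, I would verify that $\sa$ is symmetric and semibounded. Symmetry follows because the gradient term is obviously symmetric, and the shape operator $L_p$ is a symmetric endomorphism of $\R^d$ at almost every point $p \in \partial\Omega\setminus\Sigma$ (it is symmetric on the tangent space by definition, and extended trivially on the normal direction, hence remains symmetric on all of $\R^d$). For semiboundedness, the matrix field $p\mapsto L_p$ is bounded on $\partial\Omega\setminus\Sigma$ because its entries are combinations of principal curvatures of the finitely many smooth faces, each of which is $C^\infty$ and whose closure is compact; this gives a constant $M$ with $|\langle L_p\xi,\eta\rangle|\leq M|\xi||\eta|$ for a.e.\ $p$. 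Combined with the standard trace interpolation estimate
\begin{equation*}
 \|u\|_{L^2(\partial\Omega)^d}^2 \leq \eps \|\nabla u\|_{L^2(\Omega)^{d\times d}}^2 + C_\eps \|u\|_{L^2(\Omega)^d}^2, \quad u\in H^1(\Omega)^d,
\end{equation*}
valid on any bounded Lipschitz domain for every $\eps>0$ with $C_\eps$ depending on $\eps$ and $\Omega$ (this is a consequence of the compactness of the trace operator together with Ehrling's lemma, or can be proved directly), one gets
\begin{equation*}
 \sa[u,u] \geq (1-M\eps)\sum_{j=1}^d \|\nabla u_j\|_{L^2(\Omega)}^2 - MC_\eps \|u\|_{L^2(\Omega)^d}^2
\end{equation*}
for every $\eps>0$, which in particular gives the lower bound $\sa[u,u]\geq -MC_{1/(2M)}\|u\|_{L^2(\Omega)^d}^2$ after fixing $\eps = 1/(2M)$.

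Closedness of $\sa$ on $V$ now follows: the same inequality shows that the form norm $(\sa[u,u]+(MC_\eps+1)\|u\|_{L^2}^2)^{1/2}$ is equivalent to the $H^1(\Omega)^d$-norm on $V$, and $V$ is complete in the latter by step one. The Kato representation theorem then produces the unique self-adjoint, semibounded operator $A$ with $\sa[u,v] = (Au,v)_{L^2(\Omega)^d}$ for $u\in \dom A$ and $v\in V$.

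Finally, for the compact resolvent, I would use that the embedding $V\hookrightarrow L^2(\Omega)^d$ is compact: $V$ is continuously embedded in $H^1(\Omega)^d$, and $H^1(\Omega)^d\hookrightarrow L^2(\Omega)^d$ compactly by Rellich--Kondrachov, since $\Omega$ is bounded and Lipschitz. A standard argument then yields that $(A - z)^{-1}$ is compact for any $z$ in the resolvent set, so the spectrum is a discrete set of real eigenvalues of finite multiplicity, bounded from below by the lower bound of the form. The main obstacle I anticipate is not deep but requires care: one needs the curvature matrix $L$ to be genuinely bounded on $\partial\Omega\setminus\Sigma$ despite the possibility that faces meet along $\Sigma$; this is handled by the $C^\infty$-up-to-boundary assumption on each face together with their finite number, so that the principal curvatures stay uniformly bounded face by face, while $\Sigma$ has zero surface measure and does not contribute to the boundary integral.
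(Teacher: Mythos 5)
Your proof is correct and takes essentially the same route as the paper: symmetry and semiboundedness of $\sa$ via uniform boundedness of $L$ and the $\eps$-weighted trace inequality (the paper cites \cite[Lemma~4.2]{GM09} for what you derive from Ehrling's lemma), closedness of the form from equivalence of the form norm with the $H^1$-norm on the closed subspace $V$, Kato's representation theorem for self-adjointness, and Rellich--Kondrachov for compactness of the resolvent. The only cosmetic difference is that the paper proves the bound on $L$ face by face (constants $\kappa_{\max,\Upsilon}$) before taking the maximum, whereas you assert the global bound $M$ directly; your closing remark about uniform boundedness of the curvatures near $\Sigma$ correctly identifies the same implicit point that the paper also leaves at the level of assertion.
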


\begin{proof}
We will prove that the sesquilinear form $\sa$ is symmetric and bounded from below, i.e., there exists some $\mu \in \R$ such that
\begin{align}\label{eq:semibounded}
 \sa [u, u] & \geq \mu \int_\Omega |u|^2, \quad u \in \dom \sa,
\end{align}
holds. Moreover, we will show that $V = \dom \sa$, equipped with the norm
\begin{align}\label{eq:anorm}
 \| u \|_\sa := \left( \sa [u, u] + (1 - \mu) \int_\Omega |u|^2 \right)^{\frac{1}{2}},
\end{align}
is complete. As $\dom \sa \supset C_0^\infty (\Omega)^d$ is dense in $L^2 (\Omega)^d$, it follows from \cite[Chapter VI, § 1--2]{Kato} that the operator $A$ given above is well-defined and self-adjoint in $L^2 (\Omega)^d$ and its spectrum is bounded from below by $\mu$.

First of all, it follows from the symmetry of the shape operator $L_p$ that $\sa$ is symmetric. To show semi-boundedness, recall from, e.g., \cite[Lemma 4.2]{GM09} the following refined trace norm estimate: for every $\eps > 0$ there exists some $\beta (\eps) > 0$ such that
\begin{align}\label{eq:GM}
 \int_{\partial \Omega} |u|^2 \leq \eps \sum_{j = 1}^d \int_\Omega |\nabla u_j|^2 + \beta (\eps) \int_\Omega |u|^2, \quad u \in H^1 (\Omega)^d.
\end{align}
Moreover, we will use the fact that, on each compact face $\Upsilon$ of $\partial \Omega$, there exists a constant $\kappa_{\max,\Upsilon} \geq 0$ such that
\begin{align}\label{eq:boundedCurvature}
 \langle L_p \xi, \xi \rangle \leq \kappa_{\max,\Upsilon} |\xi|^2, \quad \xi \in \C^d,
\end{align}
holds for almost all $p \in \Upsilon$, i.e.\ uniformly in $p$; since $\partial\Omega$ can be covered by finitely many such compact faces, there exists a constant $\kappa_{\max}>0$ such that \eqref{eq:boundedCurvature} holds with $\kappa_{\max}$, for almost every $p \in \partial\Omega$. By \eqref{eq:GM} and \eqref{eq:boundedCurvature}, for every $\eps > 0$ we obtain
\begin{align}\label{eq:nunMalLangsam}
\begin{split}
 \sa [u, u] & \geq \sum_{j = 1}^d \int_\Omega |\nabla u_j|^2 - \kappa_{\max} \int_{\partial \Omega} |u|^2 \\
 & \geq \left (1 - \kappa_{\max} \eps \right) \sum_{j = 1}^d \int_\Omega |\nabla u_j|^2 - \kappa_{\max} \beta (\eps) \int_\Omega |u|^2
\end{split}
\end{align}
for all $u \in \dom \sa$. If we fix $\eps$ such that $\kappa_{\max} \eps < 1$ and choose $\mu = - \kappa_{\max} \beta (\eps)$, \eqref{eq:semibounded} follows. In particular, \eqref{eq:anorm} defines a norm on $\dom \sa$. Furthermore, it follows from \eqref{eq:nunMalLangsam} and the continuity of the trace operator $H^1 (\Omega) \to L^2 (\partial \Omega)$ that the norm $\|\cdot\|_\sa$ is equivalent to the usual norm of $H^1 (\Omega)^d$ on $\dom \sa$. Since $\dom \sa = V$ is a closed subspace of $H^1 (\Omega)^d$, it follows that $V$ is complete with respect to $\|\cdot\|_\sa$. Hence it follows that the operator $A$ is well-defined and self-adjoint with spectrum bounded from below.

Finally, since $\dom \sa \subset H^1 (\Omega)^d$ is compactly embedded into $L^2 (\Omega)^d$, it follows that $A$ has a compact resolvent and, hence, its spectrum is purely discrete.
\end{proof}

It is a consequence of the structure of the form $\sa$ that, in the interior of $\Omega$, $A$ acts componentwise as the Laplacian, as we now show. It will also turn out that if $\Omega$ is a lip domain, then all eigenvalues of $A$ are positive, something we will show in the course of the proof of Theorem~\ref{thm:hot-spots-III}.

\begin{lemma}\label{lem:A-Laplacian}
Under the assumptions of Proposition~\ref{prop:A-general}, if $u = (u_1,\ldots, u_d)^\top \in \dom A$ and $\lambda \in \R$ satisfy $A u = \lambda u$, then, for all $j=1,\ldots,d$, $u_j$ is analytic and $-\Delta u_j = \lambda u_j$ in $\Omega$. In particular, an orthonormal basis of $L^2 (\Omega)^d$ consisting of real-valued eigenfunctions of $A$ exists.
\end{lemma}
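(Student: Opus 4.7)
The plan is to reduce the vectorial eigenvalue equation $Au = \lambda u$ to $d$ decoupled scalar Helmholtz equations by testing against compactly supported vector fields, and then invoke standard elliptic regularity.

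First, I would observe that any $v \in C_0^\infty(\Omega)^d$ trivially belongs to $V$ defined in \eqref{eq:H}, since its trace on $\partial\Omega$ vanishes (so both the normal condition on $\Gamma_{\rm N}$ and the Dirichlet-type condition on $\Gamma_{\rm D}$ hold automatically). For such test fields, the boundary integral $\int_{\partial\Omega} \langle Lu, v\rangle$ in the form $\sa$ disappears, and the identity $\sa[u,v] = \lambda \int_\Omega \langle u, v\rangle$ reduces to
\[
\sum_{j=1}^d \int_\Omega \langle \nabla u_j, \nabla v_j\rangle = \lambda \sum_{j=1}^d \int_\Omega u_j \overline{v_j}.
\]
Choosing test fields of the form $v = \varphi\, e_j$ with $\varphi \in C_0^\infty(\Omega)$ isolates each component:
\[
\int_\Omega \langle \nabla u_j, \nabla \varphi\rangle = \lambda \int_\Omega u_j \overline{\varphi}, \qquad \varphi \in C_0^\infty(\Omega), \; j=1,\dots,d.
\]
Thus each $u_j \in H^1(\Omega)$ is a distributional (weak) solution of $-\Delta u_j = \lambda u_j$ in $\Omega$.

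Next, I would apply interior elliptic regularity: since the Helmholtz operator $-\Delta - \lambda$ has real-analytic coefficients, every distributional solution in $H^1_{\rm loc}(\Omega)$ is real analytic in $\Omega$ (this is a classical consequence of, e.g., elliptic bootstrapping followed by the Cauchy--Kovalevskaya / analytic hypoellipticity theorem for operators with constant coefficients). In particular $u_j$ is analytic and $-\Delta u_j = \lambda u_j$ pointwise in $\Omega$.

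Finally, for the existence of an orthonormal basis of real-valued eigenfunctions, I would note that $A$ is self-adjoint with compact resolvent by Proposition~\ref{prop:A-general}, so by the spectral theorem $L^2(\Omega)^d$ admits an orthonormal basis of eigenfunctions of $A$. Since the form $\sa$ has real coefficients (the shape operator $L_p$ is real symmetric and the gradient form involves real inner products), $\sa[\overline{u}, \overline{v}] = \overline{\sa[u,v]}$, so complex conjugation maps $\dom A$ into itself and commutes with $A$; hence for any eigenfunction $u$, its real and imaginary parts are real-valued eigenfunctions associated with the same (real) eigenvalue $\lambda$. Orthonormalising within each (finite-dimensional) eigenspace via Gram--Schmidt yields the desired real-valued orthonormal basis. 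There is no serious obstacle here: the only non-trivial input is interior analytic regularity, which is standard.
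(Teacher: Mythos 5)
Your proof is correct and follows essentially the same route as the paper: test against $C_0^\infty(\Omega)^d \subset V$ to decouple into scalar Helmholtz equations, invoke interior analytic regularity, and then use the reality of the form (equivalently, of the interior equation) to extract a real-valued orthonormal eigenbasis. Your treatment of the last step is in fact slightly more explicit than the paper's about why conjugation preserves $\dom A$ and commutes with $A$, but the substance is the same.
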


\begin{proof}
Since, as noted earlier, $C_0^\infty(\Omega)^d \subset V$, by assumption on $u$
\begin{displaymath}
	\sum_{j=1}^d \int_\Omega \langle \nabla u_j, \nabla v_j \rangle = \sa [u,v] =  \lambda \sum_{j=1}^d \int_\Omega \langle u, v\rangle
\end{displaymath}
for all $v = (v_1,\ldots,v_d)^\top \in C_0^\infty (\Omega)^d$. Fixing $j=1,\ldots,d$ and choosing $v_j = w \in C_0^\infty(\Omega)$ arbitrary, $v_i=0$ for $i \neq j$, we see immediately that $u_j$ thus satisfies
\begin{displaymath}
	\int_\Omega \langle \nabla u_j, \nabla w\rangle = \lambda \int_\Omega u_j \overline{w}
\end{displaymath}
for all $w \in C_0^\infty (\Omega)$. A completely standard argument now shows that $-\Delta u_j = \lambda u_j$ in $L^2(\Omega)$ and hence, by equally standard regularity theory, $u_j$ is analytic and the eigenvalue equation is satisfied everywhere in $\Omega$.

Moreover, since the actual eigenvalue equation is the Helmoltz equation inside $\Omega$, the real and imaginary parts of eigenfunctions of $A$ are eigenfunctions themselves and, hence, an orthonormal basis of real-valued eigenfunctions can be found.
\end{proof}

We next investigate the domain and action of the operator $A$ more closely. Recall that space $V = \dom \sa$ is defined in \eqref{eq:H}.

\begin{lemma}
\label{lem:Weingarten}
Let $\Omega \subset \R^d$ be piecewise smooth in the sense of Definition~\ref{def:piecewise-smooth}; if $\Gamma_{\rm D} \neq \emptyset$, then we assume that $\Gamma_{\rm D}$ is the union of one or several hyperplanar faces. Finally, we assume that $u = (u_1, \dots, u_d)^\top \in V$ has the following properties:
\begin{enumerate}
 \item[(a)] $u \in C^\infty (\Omega)^d$ and for each open set $U \subset \Omega$ whose closure only contains regular points of $\partial \Omega$, $u \in C^\infty (\overline U)^d$;
 \item[(b)] almost everywhere on $\partial \Omega$, the vector $\{\partial_l u_k - \partial_k u_l\}_{k, l} \nu$ is trivial;
 \item[(c)] $\langle \partial_\nu u, \nu\rangle = 0$ almost everywhere on $\Gamma_{\rm D}$;
 \item[(d)] $\Delta u_j \in L^2 (\Omega)$, $j = 1, \dots, d$.
\end{enumerate}
Then $u \in \dom A$ and $(A u)_j = - \Delta u_j$, for all $j = 1, \dots, d$.
\end{lemma}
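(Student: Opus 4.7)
The plan is to verify the definition of $\dom A$ directly: I will show that, with the componentwise Laplacian $-\Delta u := (-\Delta u_1,\ldots,-\Delta u_d)^\top$ (which lies in $L^2(\Omega)^d$ by assumption (d)), one has
\[
  \sa[u,v] \;=\; \sum_{j=1}^d \int_\Omega (-\Delta u_j)\,\overline{v_j}
\]
for every $v \in V$. Since $C_0^\infty(\Omega)^d$ is dense in $L^2(\Omega)^d$ and the right-hand side defines a bounded antilinear functional there, this identifies $u \in \dom A$ with $(Au)_j = -\Delta u_j$.

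\textbf{Step 1: componentwise Green's formula.} Because each $u_j$ belongs to $H^1(\Omega)$, has $L^2$ Laplacian by (d), and is smooth up to each regular face of $\partial\Omega$ by (a), while the exceptional set $\Sigma$ has zero $(d-1)$-dimensional Hausdorff measure, the classical Green identity applies face by face and yields
\[
  \sum_{j=1}^d \int_\Omega \langle \nabla u_j,\nabla v_j\rangle \;=\; -\sum_{j=1}^d\int_\Omega (\Delta u_j)\,\overline{v_j} \;+\; \int_{\partial\Omega}\langle \partial_\nu u, v\rangle,
\]
where $\partial_\nu u := (\partial_\nu u_1,\ldots,\partial_\nu u_d)^\top$. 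Substituting into the definition of $\sa$, the claim reduces to the pointwise identity
\[
  \langle \partial_\nu u - L_p u,\, v(p)\rangle \;=\; 0
\]
for almost every $p\in\partial\Omega$, for every admissible boundary trace of $v\in V$.

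\textbf{Step 2: the Dirichlet face.} On $\Gamma_{\rm D}$, which by hypothesis is a union of hyperplanar faces, the shape operator vanishes, $L_p\equiv 0$. Any $v\in V$ has a trace parallel to $\nu$ on $\Gamma_{\rm D}$, so the pairing reduces to a scalar multiple of $\langle \partial_\nu u,\nu\rangle$, which vanishes by (c).

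\textbf{Step 3: the Neumann face.} On $\Gamma_{\rm N}$, the trace of $v$ is tangential. Condition (b) states that $(M\nu)_k=0$ with $M_{kl}=\partial_l u_k-\partial_k u_l$, which rearranges to
\[
  \partial_\nu u_k \;=\; \sum_{l} \nu_l\,\partial_k u_l
\]
at every regular point. Consequently, for any tangent vector $v$,
\[
  \langle \partial_\nu u, v\rangle \;=\; \sum_{k,l} v_k\,\nu_l\,\partial_k u_l \;=\; \langle \partial_v u,\nu\rangle,
\]
where $\partial_v u$ is the directional derivative of the (locally smooth) vector field $u$. On the other hand, $u \in V$ implies $\langle u,\nu\rangle=0$ identically along $\Gamma_{\rm N}$; tangential differentiation in direction $v$, combined with the shape-operator relation $\partial_v \nu = -L_p v$ and the symmetry of $L_p$, gives
\[
  0 \;=\; \partial_v\langle u,\nu\rangle \;=\; \langle \partial_v u,\nu\rangle + \langle u,\partial_v\nu\rangle \;=\; \langle \partial_v u,\nu\rangle - \langle L_p u,v\rangle.
\]
Combining the two displays gives $\langle \partial_\nu u,v\rangle = \langle L_p u,v\rangle$, as required.

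\textbf{Anticipated obstacle.} The Dirichlet part is immediate once the flatness of $\Gamma_{\rm D}$ is exploited, and the integration by parts is standard given (a), (d) and the smallness of $\Sigma$. The substantive step is Step 3: one must carefully distinguish the componentwise normal derivative $\partial_\nu u_k$ (which appears in Green's formula) from the intrinsic shape-operator expression $\langle L_p u,v\rangle$ (which appears in the form), and show that assumption (b) is precisely what converts the former into a directional derivative that can then be matched to the latter via the tangential Neumann constraint inherited from $u\in V$. This geometric matching — rather than any analytic difficulty — is where all the hypotheses (a), (b) and the defining conditions of $V$ must conspire.
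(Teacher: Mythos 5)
Your geometric computation is correct and follows the same overall strategy as the paper's proof: Green's formula reduces the claim to a pointwise boundary identity, you split into the Dirichlet faces (flat, $L_p\equiv 0$, normal trace of $v$, so only $\langle\partial_\nu u,\nu\rangle$ survives and (c) kills it) and the Neumann faces (tangential trace of $v$, (b) converts $\langle\partial_\nu u,v\rangle$ into $\langle\partial_v u,\nu\rangle$, tangential differentiation of $\langle u,\nu\rangle=0$ brings in the shape operator via $\partial_v\nu=-L_p v$ and its symmetry). Your formulation in terms of directional derivatives is cleaner than the paper's, which works through a local parametrization $f$ and its coordinate tangents $\tau_j=\partial_j f$, but unwinding both shows they carry out the same computation; the observation $\langle\partial_\nu u,v\rangle=\langle\partial_v u,\nu\rangle$ under (b) is exactly what the paper computes when it manipulates the vector $(\langle\nabla u_1,\nu\rangle,\dots,\langle\nabla u_d,\nu\rangle)^\top$ against $\tau_j$.

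There is, however, a genuine gap in Step 1. You write the boundary term of Green's formula as an honest surface integral $\int_{\partial\Omega}\langle\partial_\nu u,v\rangle$, and then verify the pointwise identity $\langle\partial_\nu u-L_p u,\,v\rangle=0$ at regular points $p$. This tacitly assumes $\partial_\nu u_j\in L^2(\partial\Omega)$, which is \emph{not} guaranteed: hypothesis (a) gives smoothness up to the regular part of $\partial\Omega$ only, so near the exceptional set $\Sigma$ the normal derivative is merely an $H^{-1/2}(\partial\Omega)$ distribution, and there is no a priori control on its behaviour near $\Sigma$. In particular, a pointwise identity on $\partial\Omega\setminus\Sigma$ does not automatically upgrade to the required duality identity $\sum_j(\partial_\nu u_j|_{\partial\Omega},v_j|_{\partial\Omega})_{\partial\Omega}=\int_{\partial\Omega}\langle Lu,v\rangle$ for general $v\in V$; applying Green's identity ``face by face'' also produces uncontrolled contributions along $\partial\gamma_l\subset\Sigma$. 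The paper closes this gap by first proving the identity for $v\in C^\infty(\overline\Omega)^d$ whose boundary trace is compactly supported inside a single smooth face, where everything is classical, and then passing to arbitrary $v\in V$ via a density argument: because $\Sigma$ has zero $(d-1)$-dimensional Hausdorff measure, a capacity argument (citing \cite{EE} and \cite{AH}) produces $v^n\to v$ in $H^1(\Omega)^d$ with each $v^n$ supported away from $\Sigma$, and one passes to the limit in the $H^{-1/2}$--$H^{1/2}$ pairing. You gesture at ``the smallness of $\Sigma$'' making integration by parts standard, but this approximation step is precisely the nontrivial analytic content of the proof and needs to be carried out rather than asserted.
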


\begin{proof}
Let $u$ satisfy the assumptions of the theorem. According to the definition of $A$ we have to show
\begin{align*}
 \sa [u, v] = - \sum_{j = 1}^d \int_\Omega \Delta u_j \overline{v_j}, \quad v \in V.
\end{align*}
Indeed, by Green's first identity, for each $v \in H^1(\Omega)^d$ we have
\begin{align*}
 \sa [u, v] + \sum_{j = 1}^d \int_\Omega \Delta u_j \overline{v_j} = \sum_{j = 1}^d (\partial_\nu u_j |_{\partial \Omega}, v_j |_{\partial \Omega} )_{\partial \Omega} - \int_{\partial \Omega} \langle L u, v \rangle,
\end{align*}
where $(\cdot,\cdot)_{\partial\Omega}$ denotes the duality between $H^{- 1/2} (\partial \Omega)$ and $H^{1/2} (\partial \Omega)$, see, e.g., \cite[Lemma 4.3]{McL}. To show that the right-hand side of the previous equation equals zero if $v \in V$, assume first that $v \in C^\infty (\overline \Omega)^d$ is such that its trace $v |_{\partial \Omega}$ is supported compactly in the interior of a set $\gamma = f (U)$ for a $C^\infty$-diffeomorphism $f : \R^{d - 1} \supset U \to \R^d$ on an open set $U$. We do not assume at this stage that $v \in V$. Then
\begin{align}\label{eq:computation}
 \sum_{j = 1}^d (\partial_\nu u_j |_{\partial \Omega}, v_j |_{\partial \Omega} )_{\partial \Omega} & = \int_\gamma \left\langle \begin{pmatrix} \langle \nabla u_1, \nu \rangle \\ \vdots \\ \langle \nabla u_d, \nu \rangle \end{pmatrix}, v \right\rangle.
\end{align}
Set $\tau_j (p) = \partial_j f (p)$, so that $\tau_1 (p), \dots, \tau_{d - 1} (p)$ form a basis of the tangential space of $\partial \Omega$ at any point $p \in \gamma$. Let us write $v = \sum_{j = 1}^{d - 1} c_j \tau_j + c_d \nu$, where 
\begin{align*}
 c_j = \frac{\langle v, \tau_j \rangle}{|\tau_j|^2}, \quad j = 1, \dots, d - 1, \qquad \text{and} \qquad c_d = \langle v, \nu \rangle.
\end{align*}
Then we get, everywhere on $\gamma$,
\begin{align}\label{eq:wieSchoen}
\begin{split}
 \left\langle \begin{pmatrix} \langle \nabla u_1, \nu \rangle \\ \vdots \\ \langle \nabla u_d, \nu \rangle \end{pmatrix}, v \right\rangle & = \sum_{j = 1}^{d - 1} \overline{c_j} \left\langle \begin{pmatrix} \langle \nabla u_1, \nu \rangle \\ \vdots \\ \langle \nabla u_d, \nu \rangle \end{pmatrix}, \tau_j \right\rangle + \overline{c_d} \left\langle \begin{pmatrix} \langle \nabla u_1, \nu \rangle \\ \vdots \\ \langle \nabla u_d, \nu \rangle \end{pmatrix}, \nu \right\rangle \\
 & = \sum_{j = 1}^{d - 1} \overline{c_j} \left\langle \begin{pmatrix} \langle \partial_1 u, \tau_j \rangle \\ \vdots \\ \langle \partial_d u, \tau_j \rangle \end{pmatrix}, \nu \right\rangle + \overline{c_d} \left\langle \begin{pmatrix} \langle \nabla u_1, \nu \rangle \\ \vdots \\ \langle \nabla u_d, \nu \rangle \end{pmatrix}, \nu \right\rangle.
\end{split}
\end{align}
We compute the expressions in the terms corresponding to the tangential vectors. In fact, note that
\begin{align*}
 \langle \partial_j (u \circ f), \nu \rangle & = \sum_{l = 1}^d \langle \nabla u_l, \partial_j f \rangle \nu_l
\end{align*}
and
\begin{align*}
 \langle \partial_l u, \partial_j f \rangle - \langle \nabla u_l, \partial_j f \rangle & = \sum_{k = 1}^d \partial_l u_k \partial_j f_k - \sum_{k = 1}^d \partial_k u_l \partial_j f_k.
\end{align*}
Together with assumption (b) this yields
\begin{align*}
 \left\langle \begin{pmatrix} \langle \partial_1 u, \tau_j \rangle \\ \vdots \\ \langle \partial_d u, \tau_j \rangle \end{pmatrix}, \nu \right\rangle & = \sum_{l = 1}^d \langle \nabla u_l, \partial_j f \rangle \nu_l + \sum_{l = 1}^d \sum_{k = 1}^d (\partial_l u_k - \partial_k u_l) \partial_j f_k \nu_l \\
 & = \left\langle \partial_j (u \circ f), \nu \right\rangle = \partial_j \langle u \circ f, \nu \rangle - \langle u \circ f, \partial_j \nu \rangle = \langle u, L \tau_j \rangle;
\end{align*}
from this and \eqref{eq:wieSchoen} it follows that
\begin{align*}
 \left\langle \begin{pmatrix} \langle \nabla u_1, \nu \rangle \\ \vdots \\ \langle \nabla u_d, \nu \rangle \end{pmatrix}, v \right\rangle & = \sum_{j = 1}^{d - 1} \overline{c_j} \langle u, L \tau_j \rangle + \overline{c_d} \left\langle \begin{pmatrix} \langle \nabla u_1, \nu \rangle \\ \vdots \\ \langle \nabla u_d, \nu \rangle \end{pmatrix}, \nu \right\rangle.
\end{align*}
Hence, \eqref{eq:computation} can be written
\begin{align*}
% \label{eq:almostFinal}
 \sum_{j = 1}^d (\partial_\nu u_j |_{\partial \Omega}, v_j |_{\partial \Omega} )_{\partial \Omega} & = \sum_{j = 1}^{d - 1} \int_\gamma \frac{\langle \tau_j, v\rangle}{|\tau_j|^2} \langle u, L \tau_j \rangle + \int_\gamma \langle \nu, v \rangle  \left\langle \begin{pmatrix} \langle \nabla u_1, \nu \rangle \\ \vdots \\ \langle \nabla u_d, \nu \rangle \end{pmatrix}, \nu \right\rangle.
\end{align*}
By taking linear combinations, for all $v \in C^\infty (\overline \Omega)^d$ whose support only contains regular points of $\partial \Omega$ we obtain
\begin{align}\label{eq:almostFinal}
 \sum_{j = 1}^d (\partial_\nu u_j |_{\partial \Omega}, v_j |_{\partial \Omega} )_{\partial \Omega} & = \sum_{l = 1}^d  \left( \sum_{j = 1}^{d - 1} \int_{\gamma_l} \frac{\langle \tau_j^l, v\rangle}{|\tau_j^l|^2} \langle u, L \tau_j \rangle + \left( \partial_\nu u, \langle v, \nu^l \rangle \nu^l \right)_{\gamma_l} \right)
\end{align}
if $\gamma_1, \dots, \gamma_m$ are the smooth faces of $\partial\Omega$ such that $\partial \Omega = \cup_{l = 1}^N \overline{\gamma_l}$, $\tau_1^l, \dots, \tau_{d-1}^l$ are the tangent vectors of $\gamma_l$ associated with a regular parametrization, and $\nu^l$ denotes the exterior unit normal vector field of $\gamma_l$; moreover $(\cdot, \cdot)_{\gamma_l}$ denotes the duality between $H^{-1/2} (\gamma_l)^d$ and $H_0^{1/2} (\gamma_l)^d$ ($= H^{1/2} (\gamma_l)^d$, cf.\ \cite[Theorem 3.40]{McL}).

Now let $v \in V$ be arbitrary. First observe that if $\Gamma_{\rm D} \neq \emptyset$, so that, say, $\Gamma_{\rm D} = \gamma_1 \cup \dots \cup \gamma_m$, then by assumption on $V$ we have $\langle \tau_j^1,v\rangle = \dots = \langle \tau_j^m,v\rangle = 0$ for all $j=1,\ldots,d-1$, while assumption (c) of the lemma says exactly that $\langle \partial_\nu u, \nu\rangle = 0$ almost everywhere on $\Gamma_{\rm D}$, meaning that the integral over $\Gamma_{\rm D} = \gamma_1 \cup \dots \cup \gamma_m$ in \eqref{eq:almostFinal} will be identically zero. We may thus assume that in fact $\Gamma_{\rm D} = \emptyset$.

Now by \cite[Chapter 8, Theorem 6.8]{EE} and \cite[Corollary 5.1.15]{AH} there exists a sequence $(v^n)_n$ in $C^\infty (\overline \Omega)^d$ such that the support of each $v^n$ only contains regular points and $v^n \to v$ in $H^1 (\Omega)^d$. (This is possible as the set on which $\partial \Omega$ is non-smooth is a set of zero $d - 1$-dimensional Hausdorff measure in $\R^d$ and by a capacity argument the functions in $C_0^\infty (\R^d)$ whose supports do not contain the nonsmooth parts of $\partial \Omega$ are dense in $H^1 (\R^d)$.) In particular, $v^n |_{\gamma_l}$ converges to $v |_{\gamma_l}$ in $H_0^{1/2} (\gamma_l)$ for each $l$ and, since $\nu^l$ is smooth and bounded on $\gamma_l$, even 
\begin{align*}
 \langle v^n, \nu^l \rangle \nu_j^l \to \langle v, \nu^l \rangle \nu_j^l = 0
\end{align*}
in $H_0^{1/2} (\gamma_l)$ holds for each $l$ and each $j$. Hence,
\begin{align*}
 \sum_{j = 1}^d (\partial_\nu u_j |_{\partial \Omega}, v_j |_{\partial \Omega} )_{\partial \Omega} & = \int_{\partial \Omega} \langle u, L v \rangle.
\end{align*}
Now the statement follows from the symmetry of the shape operator.
\end{proof}

The previous lemma can be applied to show that, as claimed, a part of the spectrum of $A$ consists of the eigenvalues of the Laplacian $- \Delta_{{\rm ND}}$ on $L^2(\Omega)$ with Neumann boundary conditions on $\Gamma_{\rm N}$ and Dirichlet conditions on $\Gamma_{\rm D}$, which we denote by $\mu_n$. We fix an orthonormal basis in $L^2(\Omega)$ of eigenfunctions $\psi_n \in \dom (- \Delta_{{\rm ND}})$, that is, such that
\begin{displaymath}
	- \Delta_{{\rm ND}} \psi_n = \mu_n \psi_n, \qquad n \in \N.
\end{displaymath}
We assume without loss of generality that the $\psi_n$ are real-valued.

\begin{proposition}
\label{prop:a-decomposition}
Let $\Omega \subset \R^d$ be a bounded Lipschitz domain satisfying Hypothesis~\ref{hyp:dom}; if $\Gamma_{\rm D} \neq \emptyset$, then we assume it consists of one or several hyperplanar faces. If $- \Delta_{{\rm ND}} \psi = \mu \psi$ for some $\mu > 0$ and $\psi \neq 0$, then $u = \nabla \psi$ belongs to $\dom A$, and $A u = \mu u$. Furthermore, the functions $\frac{1}{\sqrt{\mu_n}} \nabla \psi_n$, $\mu_n \neq 0$, form an orthonormal basis of the closed subspace
\begin{displaymath}
	\nabla H^1_{0,\Gamma_{\rm D}} (\Omega) := \{ \varphi \in H^1(\Omega): \varphi|_{\Gamma_{\rm D}} = 0\}
\end{displaymath}
of $L^2 (\Omega)^d$.
\end{proposition}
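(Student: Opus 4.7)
The plan is to prove the two assertions separately. For the first (that $u := \nabla\psi \in \dom A$ with $Au = \mu u$), I would invoke Lemma~\ref{lem:Weingarten}. The membership $u \in V$ follows from the boundary conditions on $\psi$: the Neumann condition $\partial_\nu \psi = 0$ on $\Gamma_{\rm N}$ is exactly $\langle u,\nu\rangle = 0$ there, while $\psi = 0$ on $\Gamma_{\rm D}$ forces all tangential derivatives of $\psi$ to vanish on $\Gamma_{\rm D}$, so $u$ is purely normal there as required in \eqref{eq:H}. Of the four hypotheses of Lemma~\ref{lem:Weingarten}, condition (a) (smoothness inside $\Omega$ and up to regular boundary points) is standard elliptic regularity applied to the eigenvalue equation on the smooth faces; condition (b) is automatic since $\partial_l u_k - \partial_k u_l = \partial_l\partial_k\psi - \partial_k\partial_l\psi = 0$; condition (d) holds because $\Delta u_j = \partial_j \Delta \psi = -\mu\, \partial_j\psi \in L^2$, using that $\psi \in H^2(\Omega)$ thanks to Hypothesis~\ref{hyp:dom}. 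The only delicate point is condition (c): since $\Gamma_{\rm D}$ is a hyperplane perpendicular to some coordinate axis $e_j$, we have $\nu = \pm e_j$ there and $\langle \partial_\nu u,\nu\rangle = \partial_j^2 \psi$. Now $\psi \equiv 0$ on $\Gamma_{\rm D}$ yields $\partial_i^2\psi = 0$ on $\Gamma_{\rm D}$ for every $i\neq j$, while $-\Delta\psi = \mu\psi = 0$ on $\Gamma_{\rm D}$ then forces $\partial_j^2\psi = 0$ as well. With this, Lemma~\ref{lem:Weingarten} applies and delivers $(Au)_j = -\Delta u_j = \mu u_j$.

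For the second assertion, I would treat orthonormality and completeness separately. Orthonormality of $\{\nabla\psi_n/\sqrt{\mu_n}\}_{\mu_n\neq 0}$ in $L^2(\Omega)^d$ follows from Green's first identity: the boundary integral splits into a contribution over $\Gamma_{\rm N}$, where $\partial_\nu\psi_n = 0$, and one over $\Gamma_{\rm D}$, where $\psi_m|_{\Gamma_{\rm D}} = 0$; both vanish, leaving $\int_\Omega \langle \nabla\psi_n,\nabla\psi_m\rangle = \mu_n\delta_{nm}$. For completeness, let $\varphi \in H^1_{0,\Gamma_{\rm D}}(\Omega)$ and expand $\varphi = \sum_n c_n\psi_n$ in $L^2(\Omega)$. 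Since $\varphi$ lies in the form domain of $-\Delta_{\rm ND}$, the spectral theorem gives
\begin{equation*}
\int_\Omega |\nabla\varphi|^2 = \sum_n \mu_n |c_n|^2 < \infty,
\end{equation*}
so the partial sums $\varphi_N := \sum_{n\le N} c_n\psi_n$ satisfy $\|\nabla(\varphi-\varphi_N)\|_{L^2}^2 = \sum_{n>N}\mu_n|c_n|^2 \to 0$. Hence $\nabla\varphi_N \to \nabla\varphi$ in $L^2(\Omega)^d$, and since terms with $\mu_n = 0$ contribute nothing (the corresponding $\psi_n$ being constant), $\nabla\varphi$ lies in the closed span of $\{\nabla\psi_n\}_{\mu_n > 0}$. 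Combined with orthonormality, this shows that the normalized family is an orthonormal basis of $\nabla H^1_{0,\Gamma_{\rm D}}(\Omega)$, the latter being a closed subspace of $L^2(\Omega)^d$ by a standard Poincar\'e-type argument (relative to $\Gamma_{\rm D}$ if $\Gamma_{\rm D}\neq\emptyset$, or modulo constants otherwise).

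The main obstacle is verifying condition (c) of Lemma~\ref{lem:Weingarten}: here the assumption that $\Gamma_{\rm D}$ is a union of hyperplanar faces perpendicular to coordinate axes is essential, since it is exactly what allows the identification $\langle \partial_\nu u,\nu\rangle = \partial_j^2\psi$ and the subsequent cancellation via the equation $\Delta\psi = 0$ on $\Gamma_{\rm D}$. Everything else in the proof is a relatively standard application of Green's identity and spectral calculus for the self-adjoint Laplacian $-\Delta_{\rm ND}$.
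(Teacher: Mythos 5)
Your proof follows essentially the same route as the paper: membership of $\nabla\psi$ in $\dom A$ by verifying the four hypotheses of Lemma~\ref{lem:Weingarten}, then orthonormality via Green's identity, then completeness. The one point to flag is in your verification of condition (c): you specialize to the case $\nu = \pm e_j$ for a coordinate basis vector, which is the hypothesis of Theorem~\ref{thm:hot-spots-III} but \emph{not} of Proposition~\ref{prop:a-decomposition}, whose only assumption is that $\Gamma_{\rm D}$ consists of hyperplanar faces (with arbitrary constant normal $N$). The paper's proof handles the general case by writing the Laplacian in a rotated orthonormal frame $N, T^1, \dots, T^{d-1}$ adapted to the face — using rotational invariance of $\Delta$ — and then the same cancellation you use ($\Delta\psi = 0$ on $\Gamma_{\rm D}$, tangential second derivatives vanish because $\psi\equiv 0$ on the flat face) goes through verbatim; you should state that rotation explicitly rather than assume $N$ is a coordinate vector.

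Your completeness argument is a mild variant of the paper's: you constructively expand $\varphi = \sum c_n \psi_n$ and use the form $\sum\mu_n|c_n|^2 < \infty$ to show $\nabla\varphi_N\to\nabla\varphi$ in $L^2$, whereas the paper argues that any $\nabla\varphi$ orthogonal to all $\nabla\psi_n$ with $\mu_n\neq 0$ must have $\varphi$ orthogonal to every nonconstant eigenfunction, hence constant, hence $\nabla\varphi=0$. Both are standard and correct; the paper's orthogonality argument avoids invoking the form-domain identification explicitly, but yours is equally short. No substantive gap beyond the (c) point above.
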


\begin{proof}
Let $\psi \in \dom (- \Delta_{{\rm ND}})$ satisfy $- \Delta_{{\rm ND}} \psi = \mu \psi$. Under the assumptions on $\Omega$ imposed in the proposition, $\psi$ belongs to $H^2 (\Omega)$ and, hence, $u := \nabla \psi \in H^1 (\Omega)^d$. Moreover, by elliptic regularity, $u$ satisfies condition (a) of Lemma \ref{lem:Weingarten}. For condition (b) it suffices to note that
\begin{align*}
 \partial_l u_k - \partial_k u_l = \partial_l \partial_k \psi - \partial_k \partial_l \psi = 0, \quad k, l \in \{1, \dots, d\},
\end{align*}
identically in $\Omega$.

Next, we show that $\langle \partial_\nu u, \nu \rangle = 0$ almost everywhere on $\Gamma_{\rm D}$. Fix any regular $p \in \Gamma_{\rm D}$ and let $N$ be the constant outer unit normal vector of the face containing $p$. Denote by $T^1, \dots, T^{d-1}$ an orthonormal basis of the tangential space of that face. Then
\begin{align*}
 \partial_N u_j (p) = \langle N, \nabla \partial_j \psi (p)\rangle = \partial_j \langle N, \nabla \psi \rangle
\end{align*}
holds for $j = 1, \dots, d$. Hence,
\begin{align*}
 \langle \partial_\nu u (p), \nu\rangle = \langle \nabla \langle N, \nabla \psi (p) \rangle, N \rangle = \partial_N \partial_N \psi (p) = \Delta \psi (p) - \sum_{j = 1}^{d-1} \partial_{T^j} \partial_{T^j} \psi (p) = 0,
\end{align*}
where we have employed the rotational invariance of the Laplacian to write it relative to the coordinate system determined by $N,T^1,\ldots,T^{d-1}$, the fact that $\psi$ is smooth up to the boundary locally around $p$ and satisfies $\Delta \psi (p) = - \mu \psi (p) = 0$, and the fact that $\partial_{T^j} \partial_{T^j} \psi (p) = 0$ for each $j$ since $\psi=0$ on $\Gamma_{\rm D}$, which by assumption is locally a hyperplane near $p$. Thus condition (c) of Lemma \ref{lem:Weingarten} holds.

Finally,
\begin{align*}
 \Delta u_j = \Delta \partial_j \psi = - \mu \partial_j \psi = - \mu u_j, \quad j = 1, \dots, d,
\end{align*}
which assures condition (d) of Lemma \ref{lem:Weingarten}. Hence it follows from the lemma that $u \in \dom A$ and
\begin{align*}
 (A u)_j = - \Delta u_j = \mu u_j, \quad j = 1, \dots, d,
\end{align*}
that is, $u \in \ker (A - \mu)$. 

Next, note that the fields $\frac{1}{\sqrt{\mu_n}} \nabla \psi_n$, $\mu_n \neq 0$, are mutually orthonormal, since
\begin{align*}
 \int_\Omega \langle \nabla \psi_n, \nabla \psi_m \rangle = - \int_\Omega \psi_m \Delta \psi_n = \mu_n \int_\Omega \psi_n \psi_m, \quad n, m = 1, 2, 3, \dots.
\end{align*}
Now assume that $\nabla \psi$ is orthogonal to all these fields for some $\psi \in H^1_{0,\Gamma_{\rm D}} (\Omega)$. Then
\begin{align*}
 0 & = \int_\Omega \langle \nabla \psi_n, \nabla \psi \rangle = - \int_\Omega \psi \Delta \psi_n = \mu_n \int_\Omega \psi \psi_n, \quad n = 1, 2, 3, \dots,
\end{align*}
which yields that $\psi$ is orthogonal to all eigenfunctions of $- \Delta_{{\rm ND}}$, except for the constant function $\psi_1$ in the case $\Gamma_{\rm D} = \emptyset$. Thus $\psi$ is constant and, hence, $\nabla \psi = 0$ identically in $\Omega$.
\end{proof}

Proposition~\ref{prop:a-decomposition} implies, in particular, that $A$ decomposes orthogonally according to a type of Helmholtz decomposition; in fact, this decomposition holds under conditions of minimal regularity:

\begin{lemma}
% \label{lem:HelmholtzNew}
Let $\Omega \subset \R^d$ be a bounded Lipschitz domain and let $\Gamma_{\rm N}, \Gamma_{\rm D} \subset \partial \Omega$ be closed, connected subsets of $\partial\Omega$ whose intersection has zero $d-1$-dimensional Hausdorff measure, and such that $\Gamma_{\rm N} \cup \Gamma_{\rm D} = \partial \Omega$. Then we have the direct sum decomposition
\begin{equation}
\label{eq:HelmholtzNew}
 L^2 (\Omega) = \nabla H_{0, \Gamma_{\rm D}}^1 (\Omega) \oplus \left\{u \in L^2 (\Omega)^d : \diver u = 0, \langle u |_{\Gamma_{\rm N}}, \nu \rangle = 0 \right\}.
\end{equation}
\end{lemma}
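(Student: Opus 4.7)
The plan is to prove the decomposition by first establishing orthogonality of the two subspaces and then, for an arbitrary $u \in L^2(\Omega)^d$, constructing the gradient part via a variational problem of mixed Dirichlet--Neumann type. Throughout, the normal trace of vector fields in $H(\mathrm{div};\Omega) := \{v \in L^2(\Omega)^d : \mathrm{div}\, v \in L^2(\Omega)\}$ on $\Gamma_{\rm N}$ is to be interpreted in $H^{-1/2}(\Gamma_{\rm N})$ via the standard trace theory for $H(\mathrm{div})$-fields, cf.\ \cite{McL}; the condition $\langle u|_{\Gamma_{\rm N}}, \nu\rangle = 0$ in the statement is meant in this sense. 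Observe also that on Lipschitz $\Omega$, for $\psi \in H^1_{0,\Gamma_{\rm D}}(\Omega)$ and $v \in H(\mathrm{div};\Omega)$ with $\langle v, \nu\rangle|_{\Gamma_{\rm N}} = 0$, the integration by parts formula
\begin{align*}
 \int_\Omega \langle \nabla \psi, v \rangle = - \int_\Omega \overline{\mathrm{div}\,v}\, \psi
\end{align*}
holds because the boundary pairing splits into contributions over $\Gamma_{\rm D}$, where $\psi = 0$, and over $\Gamma_{\rm N}$, where the normal trace of $v$ vanishes.

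\textbf{Orthogonality.} Applied with $v$ in the second summand (hence $\mathrm{div}\, v = 0$ as well), the identity above immediately gives $\int_\Omega \langle \nabla \psi, v\rangle = 0$ for all $\psi \in H^1_{0,\Gamma_{\rm D}}(\Omega)$. In particular, the two summands have trivial intersection.

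\textbf{Construction of the gradient part.} Given $u \in L^2(\Omega)^d$, consider the variational problem: find $\varphi$ in the space $W := H^1_{0,\Gamma_{\rm D}}(\Omega)$ (if $\Gamma_{\rm D}$ has positive surface measure) or $W := \{\varphi \in H^1(\Omega) : \int_\Omega \varphi = 0\}$ (if $\Gamma_{\rm D} = \emptyset$), such that
\begin{align*}
 \int_\Omega \langle \nabla \varphi, \nabla \psi\rangle = \int_\Omega \langle u, \nabla \psi\rangle \quad \text{for all } \psi \in W.
\end{align*}
By the Poincar\'e--Friedrichs inequality on $W$, the bilinear form on the left is coercive, and the right-hand side is a bounded antilinear functional of $\psi$; the Lax--Milgram theorem provides a unique $\varphi \in W$. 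Note that if $\Gamma_{\rm D} = \emptyset$, the difference $\nabla \varphi$ does not depend on the normalization chosen for $\varphi$.

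\textbf{Verification.} Setting $v := u - \nabla \varphi \in L^2(\Omega)^d$, the variational equation gives
\begin{align*}
 \int_\Omega \langle v, \nabla \psi\rangle = 0 \quad \text{for all } \psi \in W,
\end{align*}
and by adding a constant (in the case $\Gamma_{\rm D} = \emptyset$) this extends to all $\psi \in H^1_{0,\Gamma_{\rm D}}(\Omega)$. Testing against $\psi \in C_0^\infty(\Omega)$ yields $\mathrm{div}\, v = 0$ distributionally, hence $v \in H(\mathrm{div};\Omega)$ with $\mathrm{div}\, v = 0$. The normal trace $\langle v, \nu\rangle$ is therefore well-defined in $H^{-1/2}(\partial \Omega)$, and the integration by parts formula for $H(\mathrm{div})$-fields, together with the vanishing of the left-hand side for all $\psi \in H^1_{0,\Gamma_{\rm D}}(\Omega)$, forces the pairing of $\langle v, \nu\rangle$ with every $\psi|_{\partial\Omega}$ of such $\psi$ to be zero. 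Since traces of functions in $H^1_{0,\Gamma_{\rm D}}(\Omega)$ are dense in the subspace of $H^{1/2}(\partial\Omega)$ consisting of functions supported (in the appropriate sense) in $\Gamma_{\rm N}$, this forces $\langle v, \nu\rangle = 0$ on $\Gamma_{\rm N}$, as required. The decomposition $u = \nabla \varphi + v$ together with orthogonality completes the proof.

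The main technical obstacle is the correct handling of the mixed boundary traces in low regularity, in particular the density argument needed to conclude that $\langle v,\nu\rangle$ vanishes on $\Gamma_{\rm N}$ from the variational identity; this relies on the assumption that $\Gamma_{\rm N}$ and $\Gamma_{\rm D}$ meet in a set of zero $(d-1)$-dimensional Hausdorff measure, which guarantees that test functions in $H^1_{0,\Gamma_{\rm D}}$ have boundary traces dense in the natural class of $H^{1/2}$-functions concentrated on $\Gamma_{\rm N}$.
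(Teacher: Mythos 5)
Your proof is correct, and the key computations — integration by parts, testing against $C_0^\infty(\Omega)$ to obtain $\diver v = 0$, and reading off the normal trace condition on $\Gamma_{\rm N}$ from the boundary pairing against traces of $H^1_{0,\Gamma_{\rm D}}(\Omega)$-functions — coincide with those in the paper. The difference is organizational. The paper argues abstractly: the Poincar\'e inequality makes $\nabla H^1_{0,\Gamma_{\rm D}}(\Omega)$ a closed subspace $M$ of $L^2(\Omega)^d$, so the orthogonal direct sum $L^2(\Omega)^d = M \oplus M^\perp$ is automatic, and the proof then identifies $M^\perp$ with the second summand by verifying both inclusions via exactly the computations above. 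You instead construct the decomposition of a given $u$ explicitly, solving a mixed Dirichlet--Neumann Poisson problem via Lax--Milgram and verifying that the remainder $v = u - \nabla\varphi$ lies in the second summand. The two are equivalent — Lax--Milgram here is an explicit realization of the orthogonal projection onto the closed subspace $\nabla H^1_{0,\Gamma_{\rm D}}(\Omega)$ — but your version has the small merit of first establishing $\diver v = 0$ and only then invoking the $H(\mathrm{div};\Omega)$ normal trace, whereas the paper writes Green's identity with boundary pairing for a general $L^2$ field before deducing $\diver u = 0$, which strictly ought to be done in the reverse order. One correction to your closing remark: the hypothesis on $\Gamma_{\rm N}\cap\Gamma_{\rm D}$ is not used as a density argument to conclude $\langle v,\nu\rangle|_{\Gamma_{\rm N}} = 0$; on a Lipschitz domain this condition simply \emph{means} that the normal trace of $v$ pairs to zero with the boundary traces of all $\psi\in H^1_{0,\Gamma_{\rm D}}(\Omega)$, so once the variational identity holds for all such $\psi$ there is nothing further to prove.
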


Note that $\Gamma_{\rm D} = \emptyset$ is allowed in the above lemma, as is $\Gamma_{\rm N} = \emptyset$.

\begin{proof}
The Poincar\'e inequality on $H_{0, \Gamma_{\rm D}}^1 (\Omega)$ implies that $\nabla H_{0, \Gamma_{\rm D}}^1 (\Omega)$ is complete and, hence, closed in $L^2 (\Omega)^d$. Morever, if $u$ is orthogonal to $\nabla H_{0, \Gamma_{\rm D}}^1 (\Omega)$ in $L^2 (\Omega)$, then for each $\phi \in H_{0, \Gamma_{\rm D}}^1 (\Omega)$ we have
\begin{align*}
 0 = \int_\Omega \langle u, \nabla \phi \rangle = - \int_\Omega \phi \diver u + \left( \langle u, \nu \rangle, \phi \right)_{\partial \Omega}. 
%  \int_{\partial \Omega} \langle u, \nu\rangle \phi.
\end{align*}
On the one hand, using $\phi \in C_0^\infty (\Omega)$ this implies $\diver u = 0$ identically. On the other hand, inserting an arbitrary $\phi \in H_{0, \Gamma_{\rm D}}^1 (\Omega)$ in the above relation, we obtain $\langle u |_{\Gamma_{\rm N}}, \nu\rangle = 0$. Finally, it follows from the reverse computation that any $u \in L^2 (\Omega)^d$ such that $\diver u = 0$ and $\langle u |_{\Gamma_{\rm N}}, \nu\rangle = 0$ is orthogonal to $\nabla H_{0, \Gamma_{\rm D}}^1 (\Omega)$; this completes the proof.
\end{proof}

We will denote the projection of $A$ onto the second subspace in \eqref{eq:HelmholtzNew} by $A_2$, and its ordered eigenvalues by $\tau_k (A_2)$. In dimension two, and under sufficient hypotheses on $\Omega$, $A_2$ can be characterized relatively simply; the $\tau_k$ are the eigenvalues of the (scalar) Laplacian with Dirichlet conditions on $\Gamma_{\rm N}$ and Neumann conditions on $\Gamma_{\rm D}$ (see \cite[Theorem~3.5]{Rprep} for the case $\Gamma_{\rm D} = \emptyset$, or \cite[Proposition~3.4]{ARprep} for the general case). For $\Omega \subset \R^d$ the situation becomes much more complicated; we will discuss this further in Section~\ref{sec:curl-curl}. However, for the purposes of proving Theorem~\ref{thm:hot-spots-III}, we will not actually need any further properties of $A_2$.

\section{Proof of Theorem~\ref{thm:hot-spots-III}}
\label{sec:proof}

With the help of Proposition~\ref{prop:a-decomposition} we can build on the ideas set out in \cite{Rprep} for the case of two-dimensional lip domains. We first look more closely at lip domains in $\R^d$. We recall that a subspace $X$ of $\R^d$ is called a sublattice if $x \in X$ implies $|x| \in X$, where the modulus is taken componentwise.

\begin{lemma}
\label{lem:sublattice}
Let $d \geq 2$, $\nu$ a unit vector in $\R^d$ and $X = \R^d \ominus \spann \{\nu\}$. Then $X$ is a sublattice if and only if either $\nu$ has exactly two nonzero components and these two have opposite sign or $\nu = \pm \e_j$ for some standard basis vector $\e_j$.
\end{lemma}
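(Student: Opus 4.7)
The proof should be a direct case analysis, built on the observation that $X = \spann\{\nu\}^\perp$ is simply the hyperplane $\{x \in \R^d : \langle x, \nu\rangle = 0\}$, so the sublattice condition reads:
\begin{equation*}
 \langle x, \nu \rangle = 0 \quad \Longrightarrow \quad \langle |x|, \nu \rangle = 0 \qquad \text{for every } x \in \R^d.
\end{equation*}
With this reformulation, the plan splits cleanly into the two implications.

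For the ``if'' direction I would dispose of the two cases by inspection. If $\nu = \pm e_j$, then $\langle x, \nu \rangle = 0$ means $x_j = 0$, hence $|x|_j = 0$ and $|x| \in X$. If $\nu$ has exactly two nonzero entries of opposite sign, say (after relabeling) $\nu = (\alpha, -\beta, 0, \ldots, 0)$ with $\alpha, \beta > 0$, then $x \in X$ forces $\alpha x_1 = \beta x_2$, so $x_1$ and $x_2$ are either both nonnegative or both nonpositive; in either case $\alpha |x_1| - \beta |x_2| = \pm (\alpha x_1 - \beta x_2) = 0$.

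For the ``only if'' direction I would argue by contraposition, ruling out any other pattern of signs of $\nu$ by producing an explicit $x \in X$ with $|x| \notin X$. First, if $\nu$ has two nonzero entries of the same sign, say $\nu_i, \nu_j > 0$, set $x = \nu_j e_i - \nu_i e_j$; then $\langle x, \nu\rangle = 0$ while $\langle |x|, \nu\rangle = 2\nu_i \nu_j > 0$. Next, suppose $\nu$ has at least three nonzero entries $\nu_i, \nu_j, \nu_k$. Applying the test vector $\nu_j e_i - \nu_i e_j \in X$ forces $\nu_i|\nu_j| + \nu_j|\nu_i| = 0$, i.e.\ $\nu_i$ and $\nu_j$ must have opposite signs, and the analogous choice for each pair forces each of $\{\nu_i,\nu_j\}$, $\{\nu_i,\nu_k\}$, $\{\nu_j,\nu_k\}$ to be of opposite signs simultaneously, which is impossible for three real numbers.

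There is no real obstacle: the content is a sign-counting argument, and the only mild care needed is in choosing the explicit witness vectors $x = \nu_j e_i - \nu_i e_j$ used to rule out the bad configurations. Together the two parts give exactly the stated characterization.
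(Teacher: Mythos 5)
Your proof is correct, but it takes a genuinely different and more elementary route than the paper's. The paper invokes a structural characterization of sublattices from \cite[Lemma A.1]{KKVW09}---namely, that a subspace is a sublattice if and only if it admits a basis of nonnegative vectors with pairwise disjoint supports---and then analyzes what such a basis can look like in a hyperplane. You instead work directly from the definition: $X$ is a sublattice iff $\langle x,\nu\rangle = 0$ implies $\langle |x|,\nu\rangle = 0$, and you settle the ``only if'' direction by exhibiting explicit witness vectors $x = \nu_j e_i - \nu_i e_j$ that fail the test whenever $\nu$ has two like-signed entries or at least three nonzero entries. Your approach is self-contained and avoids the external lemma, at the modest cost of the case analysis by witnesses; the paper's approach is shorter once the KKVW09 characterization is granted, and it illuminates the structure of the orthogonal complement (the sublattice basis), which the paper does not actually exploit further but is perhaps conceptually aligned with the broader framework. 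One small point of hygiene: in your same-sign case you assume $\nu_i,\nu_j>0$, but since $X$ depends only on $\spann\{\nu\}$ you may replace $\nu$ by $-\nu$, so this normalization is legitimate and worth stating.
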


Thus, if $\Omega$ is a lip domain, we have that $\R^d \ominus \spann \{\nu(x)\}$ is a sublattice of $\R^d$ for almost all $x \in \partial\Omega$. 

\begin{proof}
The proof is based on the following characterization of sublattices, see \cite[Lemma A.1]{KKVW09}: a subspace $X$ is a sublattice if and only if there exists a basis $\{u^1, u^2, \dots, u^m \}$ of $X$ (which we are going to call a sublattice basis) such that all entries of each $u^k$ are non-negative and for each $j$ there is at most one $k$ such that $u_j^k$ is nonzero. 

Let us assume that the space $X$ as defined in the lemma is a sublattice, and let $\{u^1, u^2, \dots, u^{d - 1} \}$ be such a sublattice basis. As each $u^k$ is non-trivial and $X$ is $d-1$-dimensional, none of them can have more than two nonzero entries, and if one of these vectors has two nonzero entries then the remaining $u^k$ must be positive multiples of standard basis vectors. In case there exists $k_0$ such that $u^{k_0}$ has exactly two nonzero elements, say $u^{k_0} = (a, b, 0, \dots, 0)^\top$ for some $a, b > 0$, then $u^{k_0} \perp \nu$ implies $a \nu_1 + b \nu_2 = 0$, that is, $\nu = \pm \frac{1}{a^2 + b^2} (b, -a, 0, \dots, 0)^\top$. On the other hand, if each $u^k$, $k = 1, \dots, d - 1$, has exactly one non-trivial entry then $u^k \perp \nu$ for all $k$ implies $\nu_j = 0$ for all but one $j$, that is, $\nu = \pm \e_j$ for some $j$. 

For the reverse implication, in the case that $\nu = \pm \e_j$ for some $j$, $\{\e_m : m \neq j\}$ is a sublattice basis of $X$. In the other case, where $\nu$ has exactly two nonzero components and these have opposite signs, say $\nu = (b, -a, 0, \dots, 0)^\top$ for some positive $a, b$, then 
\begin{align*}
 \left\{ \begin{pmatrix} a \\ b \\ 0 \\ \vdots \\ 0 \end{pmatrix}, \e_3, \dots, \e_d \right\}
\end{align*}
is a sublattice basis for $X$. In each case it follows that $X$ is a sublattice of $\R^d$.
\end{proof}

The following lemma will be needed in the proof of Theorem \ref{thm:hot-spots-III}.

\begin{lemma}\label{lem:normalComponents}
Assume that $\Omega \subset \R^d$ is a piecewise smooth lip domain. Then for each regular point $p_0 \in \partial \Omega$ there exist an open neighborhood $\cO (p_0)$ in $\partial \Omega$ and numbers $k, l \in \{1, \dots, d\}$ such that $\nu_j (p) = 0$ for all $p \in \cO (p_0)$ and each $j \neq k, l$.
\end{lemma}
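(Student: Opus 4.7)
I would split the argument into two cases, according to whether $\nu(p_0)$ has two nonzero components or a single nonzero component.

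\emph{Case 1: $\nu(p_0)$ has two nonzero components at positions $k$ and $l$.} This case is straightforward. Since $\nu$ is smooth near $p_0$ (as $p_0$ is regular), there is a neighborhood $\cO(p_0)$ where $\nu_k$ and $\nu_l$ remain nonzero. At almost every $p \in \cO(p_0)$, the lip condition forces $\nu(p)$ to have at most two nonzero components; since $\nu_k(p), \nu_l(p) \neq 0$, these two must be $k$ and $l$, so $\nu_j(p) = 0$ almost everywhere for $j \neq k, l$. By continuity, $\nu_j \equiv 0$ on $\cO(p_0)$ for $j \neq k, l$.

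\emph{Case 2: $\nu(p_0) = \pm e_{k_0}$.} This is the main obstacle. The plan is to parametrize $\partial\Omega$ near $p_0$ as a graph $x_{k_0} = h(x')$ over the hyperplane $\{x_{k_0} = 0\}$, with $h \in C^\infty$ and $\nabla h(x_0') = 0$, so that
\begin{equation*}
  \nu = \pm \frac{1}{\sqrt{1 + |\nabla h|^2}} \bigl(-\partial_1 h, \dots, -\partial_{k_0-1} h, 1, -\partial_{k_0+1} h, \dots, -\partial_d h\bigr)^\top.
\end{equation*}
After shrinking the neighborhood so that $\nu_{k_0}$ does not vanish, the lip condition translates into the pointwise relation
\begin{equation*}
  \partial_i h(x') \, \partial_j h(x') = 0 \quad \text{for all } i \neq j \text{ in } \{1,\ldots,d\}\setminus\{k_0\},
\end{equation*}
together with a fixed sign for each $\partial_j h$, valid almost everywhere and hence, by continuity of $\nabla h$, everywhere on the neighborhood.

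The key step is then to upgrade this pointwise product condition to a rigidity statement using $C^\infty$ regularity. Fix $i \neq j$, set $A = \{\partial_i h > 0\}$, $B = \{\partial_j h > 0\}$, and $C = \{\partial_i h = \partial_j h = 0\}$. Then $A, B$ are disjoint and open, $C$ is closed, and their union is the entire neighborhood, so $A \cup B \cup \mathrm{int}(C)$ is dense. The mixed partial $\partial_i \partial_j h$ vanishes on $A$ (where $\partial_j h \equiv 0$), on $B$ (by symmetry), and on $\mathrm{int}(C)$, hence vanishes identically by continuity. Integrating, $\partial_j h$ depends only on $x_j$: we may write $\partial_j h(x') = a_j(x_j)$ for each $j \neq k_0$. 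On a product neighborhood (box) of $p_0$, the condition $a_i(x_i) \, a_j(x_j) \equiv 0$ for $i \neq j$ forces all $a_j$ except possibly one, say $a_l$, to vanish identically. Hence $\partial_j h \equiv 0$ and $\nu_j \equiv 0$ in a neighborhood of $p_0$ for all $j \notin \{k_0, l\}$, completing the proof.

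The only real technical difficulty lies in Case 2, where the point $\nu(p_0) \in S_2$ sits at a ``branch point'' of the admissible set $S' \subset S^{d-1}$; purely topological or continuity arguments would leave open the possibility that nearby the normal takes values on several different arcs of $S'$ meeting at $\pm e_{k_0}$. The $C^\infty$ (really $C^2$) regularity is essential to rule this out via the vanishing-mixed-partials argument above.
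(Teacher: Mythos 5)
Your proof is correct, and it is genuinely different from the paper's. The paper also reduces to the case $\nu(p_0)=\pm e_{k_0}$ and writes the boundary as a graph $x_{k_0}=F(x')$, but from there it argues by contradiction and topologically: it introduces the sets $\cO_1,\cO_2$ of nearby points where $\nu$ has a nonzero component in two distinct ``other'' directions, observes that the level sets of $F$ inside (connected components of) $\cO_1$ and $\cO_2$ are respectively of the form $\{x_1=\text{const}\}$ and $\{x_2=\text{const}\}$, introduces the notion of \emph{essential} boundary components of $\partial\omega_1$, shows these must be hyperplanes $\{x_1=\text{const}\}$, and derives a contradiction from $p_0$ lying on essential boundary components of both $\cO_1$ and $\cO_2$. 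Your argument instead works with the derivatives $\partial_j h$ directly: the lip condition forces $\partial_i h\,\partial_j h\equiv0$ with fixed signs, you show $\partial_i\partial_j h=0$ on the dense open set $A\cup B\cup\mathrm{int}(C)$ and hence everywhere by continuity (this is where $C^2$, i.e.\ Schwarz, is used), integrate to get a separated form $\partial_j h(x')=a_j(x_j)$, and conclude from $a_i(x_i)a_j(x_j)\equiv0$ on a box that at most one $a_j$ can be nontrivial. Your route is more analytic and arguably shorter, replacing the paper's level-set/connectedness bookkeeping with a single vanishing-mixed-partials observation; the paper's route, by contrast, is more visibly geometric and makes no appeal to equality of mixed partials. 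Both need essentially the same regularity.

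One small point worth spelling out if you write this up: the assertion that $A\cup B\cup\mathrm{int}(C)$ is dense should be justified by noting that its complement is $C\setminus\mathrm{int}(C)$, a closed set with empty interior (any open subset of $C$ lies in $\mathrm{int}(C)$), hence nowhere dense. Also, strictly you first need the a.e.\ product relation $\partial_i h\,\partial_j h=0$ to be upgraded to an everywhere relation by continuity of $\nabla h$ before defining $A$, $B$, $C$; you say this in passing, and it is fine, but it is the step that licenses the claim that $A\cup B\cup C$ is the whole neighborhood.
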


\begin{proof}
Suppose the conclusion of the lemma is not satisfied at the regular point $p_0 \in \partial\Omega$. We will fix a sufficiently small neighborhood $\cV \subset \R^d$ of $p_0$, which we may take to be a $d$-dimensional open cube of the form $\cV = U \times I$ for a $d-1$-dimensional cube $U \subset \R^{d-1}$ and an interval $I \subset \R$. In what follows we will always consider the relative topology in the set $\partial\Omega \cap \mathcal{V}$.

We may assume that there exists a smooth function $F: U \to I$ such that
\begin{displaymath}
 	\cO (p_0) := \partial\Omega \cap \cV = \{ (x_1,\ldots,x_{d-1},F(x_1,\ldots,x_{d-1})) : (x_1,\ldots,x_{d-1}) \in U\}.
\end{displaymath}
We may also assume that the coordinate axes are chosen such that, within $\mathcal{V}$, $p_0$ is in the closure of the two sets
\begin{displaymath}
\begin{aligned}
	\cO_1 &:= \{ p \in \partial \Omega \cap \cV: \nu (p) = \tfrac{1}{1+a^2}(-a,0,\ldots,0,1)^\top \text{ for some } a = a(p) > 0\},\\
	\cO_2 &:= \{ p \in \partial \Omega \cap \cV: \nu (p) = \tfrac{1}{1+b^2}(0,-b,\ldots,0,1)^\top \text{ for some } b = b(p) > 0\};
\end{aligned}
\end{displaymath}
here the functions $a$ and $b$ will be smooth in $\cO_1$ and $\cO_2$ respectively (which is true since $F$ is smooth). Note that $\cO_1$ and $\cO_2$ are relatively open in $\partial\Omega \cap \mathcal{V}$. We also set
\begin{displaymath}
	\Upsilon := \{ p \in \partial \Omega \cap \cV: \nu (p) = (0,\ldots,0,1)^\top \}
\end{displaymath}
which, again since $p_0$ is regular, is a relatively closed set. Moreover, $p_0 \in \Upsilon \cap \partial\cO_1 \cap \partial\cO_2$. (See Figure~\ref{fig:level-surface-configuration}-left, but note that \emph{a priori} these three sets need not exhaust $\partial \Omega \cap \cV$, and that $\cO_1$ and $\cO_2$ need not be connected.)

Observe that the vector $(-\frac{\partial F}{\partial x_1}, \ldots, -\frac{\partial F}{\partial x_{d-1}}, 1)^\top$ is always an outer normal vector to $\partial\Omega$ in $\cV$. It follows from the definition of the respective sets that, for any $p=(p',p_d) \in \partial\Omega\cap\mathcal{V}$,
\begin{displaymath}
\begin{aligned}
	\frac{\partial F}{\partial x_1} (p') & \neq 0 \qquad &&\text{if } p \in \cO_1,\\
	\frac{\partial F}{\partial x_2} (p') & \neq 0 \qquad &&\text{if } p \in \cO_2,\\
	\nabla F(p') &= 0 \qquad &&\text{if } p \in \Upsilon,
\end{aligned}
\end{displaymath}
while $\frac{\partial F}{\partial x_2} (p'),\ldots,\frac{\partial F}{\partial x_{d-1}} (p') = 0$ whenever $(p', F (p')) \in \cO_1$ (and mutatis mutandis for $\cO_2$). It follows from the fact that $\nabla F(p') = 0$ if $(p', F (p')) \in \Upsilon$ that $F$, as a smooth function, is constant on each connected component of $\{p' \in U : (p', F (p)) \in \Upsilon\}$. Moreover, within each connected component of $\cO_1$, all level surfaces of $F$ are of the form $x_1 = \text{const}$, and, similarly, inside each connected component of $\cO_2$, all level surfaces of $F$ are of the form $x_2 = \text{const}$; in particular, within each connected component of $\cO_1$, $F$ is strictly monotonic in the direction $e_1$, and analogously for $\cO_2$ and $e_2$. (See Figure~\ref{fig:level-surface-configuration}-center.)
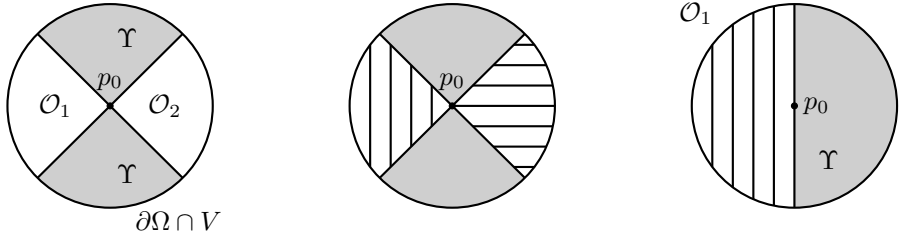
\begin{figure}[ht]
\begin{tikzpicture}[scale=0.9]
\filldraw[color=lightgray!75] (-sqrt{1.125},sqrt{1.125}) -- (0,0) -- (sqrt{1.125},sqrt{1.125}) arc[start angle=45, end angle=135, radius=1.5] -- cycle;
\filldraw[color=lightgray!75] (sqrt{1.125},-sqrt{1.125}) -- (0,0) -- (-sqrt{1.125},-sqrt{1.125}) arc[start angle=225, end angle=315, radius=1.5] -- cycle;
\draw[thick] (0,0) circle[radius=1.5];
\filldraw (0,0) circle[radius=1.25pt];
\node at (0,0.1) [anchor=south] {$p_0$};
\draw[thick] (-sqrt{1.125},sqrt{1.125}) -- (sqrt{1.125},-sqrt{1.125});
\draw[thick] (-sqrt{1.125},-sqrt{1.125}) -- (sqrt{1.125},sqrt{1.125});
\node at (-0.8,0) {$\cO_1$};
\node at (0.8,0) {$\cO_2$};
\node at (0.25,-1) {$\Upsilon$};
\node at (0.25,1) {$\Upsilon$};
\node at (1,-1.7) {$\partial\Omega \cap V$};
\filldraw[color=lightgray!75] (5-sqrt{1.125},sqrt{1.125}) -- (5,0) -- (5+sqrt{1.125},sqrt{1.125}) arc[start angle=45, end angle=135, radius=1.5] -- cycle;
\filldraw[color=lightgray!75] (5+sqrt{1.125},-sqrt{1.125}) -- (5,0) -- (5-sqrt{1.125},-sqrt{1.125}) arc[start angle=225, end angle=315, radius=1.5] -- cycle;
\draw[thick] (5,0) circle[radius=1.5];
\draw[thick] (5-sqrt{1.125},sqrt{1.125}) -- (5+sqrt{1.125},-sqrt{1.125});
\draw[thick] (5-sqrt{1.125},-sqrt{1.125}) -- (5+sqrt{1.125},sqrt{1.125});
\filldraw (5,0) circle[radius=1.25pt];
\node at (5,0.1) [anchor=south] {$p_0$};
\foreach \X in {0,0.3,0.6,0.9}
{
\draw[thick] (5-\X,-\X) -- (5-\X,\X);
}
\draw[thick] (5-1.2,sqrt{0.81}) -- (5-1.2,-sqrt{0.81});
\draw[thick] (5+0,0) -- (5+1.5,0);
\draw[thick] (5+0.3,0.3) -- (5+sqrt{2.16},0.3);
\draw[thick] (5+0.6,0.6) -- (5+sqrt{1.89},0.6);
\draw[thick] (5+0.9,0.9) -- (5+sqrt{1.44},0.9);
\draw[thick] (5+0.3,-0.3) -- (5+sqrt{2.16},-0.3);
\draw[thick] (5+0.6,-0.6) -- (5+sqrt{1.89},-0.6);
\draw[thick] (5+0.9,-0.9) -- (5+sqrt{1.44},-0.9);
\filldraw[color=lightgray!75] %(11.5,0) -- (8.5,0) arc[start angle=180, end angle=360, radius=1.5] -- cycle;
(10,-1.5) -- (10,1.5) arc[start angle=90, end angle=-90, radius=1.5] -- cycle;
\draw[thick] (10,0) circle[radius=1.5];
\filldraw (10,0) circle[radius=1.25pt];
\node at (10,0) [anchor=west] {$p_0$};
%\foreach \Y in {0,0.3,0.6,0.9,1.2}
%{\draw[thick] (10-sqrt{1.5*1.5-\Y*\Y},\Y) -- (10+sqrt{1.5*1.5-\Y*\Y},\Y);}
\draw[thick] (10,-1.5) -- (10,1.5);
%\draw[thick] (10-sqrt{2.25},0) -- (10+sqrt{2.25},0);
\draw[thick] (10-0.3,-sqrt{2.16}) -- (10-0.3,+sqrt{2.16});
\draw[thick] (10-0.6,-sqrt{1.89}) -- (10-0.6,+sqrt{1.89});
\draw[thick] (10-0.9,-sqrt{1.44}) -- (10-0.9,+sqrt{1.44});
\draw[thick] (10-1.2,-sqrt{0.81}) -- (10-1.2,+sqrt{0.81});
\node at (10.5,-0.8) {$\Upsilon$};
\node at (10-sqrt{1.125},sqrt{1.125}) [anchor=south east] {$\cO_1$};
\end{tikzpicture}
\caption{Left: A schematic representation (when $d=3$) of the regions $\Upsilon$, where $F$ is constant (in light gray), $\cO_1$ and $\cO_2$. Center: A depiction of the necessary form of the level surfaces of $F$ in $\cO_1$ (vertical lines) and $\cO_2$ (horizontal lines). Right: The only possible configuration consistent with $\Upsilon$ meeting $\cO_1$ at $p_0$. The vertical lines again represent the level surfaces of $F$.}
\label{fig:level-surface-configuration}
\end{figure}

Let $\omega_1$ be any connected component of $\cO_1$ in $\mathcal{V}$, which is (relatively) open in $\partial\Omega \cap \mathcal{V}$, as noted above. Now $\partial\omega_1$ will not in general be connected; we will distinguish between two possibilities for its connected components $\gamma$. Namely, we will call $\gamma$ \emph{non-essential} if there exists a neighborhood of $\gamma$ entirely contained in $\overline{\omega_1}$, and \emph{essential} otherwise (in which case $\gamma$ also belongs to the boundary of $(\partial\Omega \cap \mathcal{V}) \setminus \overline{\omega_1}$).

Note that by continuity of $F$ any connected component of $\partial\omega_1$, essential or otherwise, will be necessarily contained in a connected component of $\Upsilon$, on which $F$ is constant, as noted above.

We claim that every essential connected component of $\partial\omega_1$ is in fact of the form $\{x_1 = \text{const}\}$ inside $\mathcal{V}$. To see this, suppose such a connected component $\gamma$ of $\partial\omega_1$ is \emph{not} of this form. Since it is essential it cannot be a singleton; and since it separates $\omega_1$ from a connected component of $\partial\Omega \setminus \overline{\omega_1}$ inside $\mathcal{V}$ it cannot be properly contained in a set of the form $\{x_1 = \text{const}\}$. Thus there must exist two distinct points $X^\ast = (x_1^\ast,\ldots,x_d^\ast),Y^\ast = (y_1^\ast,\ldots,y_d^\ast) \in \gamma$ for which $x_1^\ast \neq y_1^\ast$.

Now since $F$ is constant on $\gamma$, necessarily $$x_d^\ast = F(x_1^\ast,\ldots,x_{d-1}^\ast) = F(y_1^\ast,\ldots,y_{d-1}^\ast) = y_d^\ast.$$ On the other hand, $F$ is strictly monotonic in the direction $e_1$ within $\omega_1$ (concretely, supposing without loss of generality that it is increasing, this means that $x_d = F(x_1, x_2,\ldots, x_{d-1}) < F(y_1, y_2, \ldots, y_{d-1}) = y_d$ for all $x, y \in \omega_1$ whenever $x_1 < y_1$, independently of the other coordinates). Since $F$ is continuous on $\{p' \in U : (p', F (p')) \in \overline{\omega_1}\}$ and constant on sets of the form $\{x_1 = \text{const}\}$ within $\{p' \in U : (p', F (p')) \in \omega_1\}$, it follows that $F(x_1,\ldots,x_{d-1}) \neq F(y_1,\ldots,y_{d-1})$ on $\overline{\omega_1}$ whenever $x_1 \neq y_1$, which is a contradiction to $x_d^\ast = y_d^\ast$.%But this is a contradiction to the fact that $X^\ast,Y^\ast \in \gamma$, a connected component of $\Upsilon$ on which $F$ is therefore constant, as noted above.

Hence the assumption that $\gamma$ was not of the form $\{x_1 = \text{const}\}$ inside $\mathcal{V}$ must be false. An analogous statement is true, with the same reasoning, of each connected component $\omega_2$ of $\cO_2$, that is, every essential connected component of $\partial\omega_2$ is of the form $\{x_2 = \text{const}\}$. Since $\cO_1$ and $\cO_2$ are the unions of their connected components, the same is also true of $\partial\cO_1$ and $\partial\cO_2$: every essential component of their boundaries must be of the form $\{x_1 = \text{const}\}$ or $\{x_2 = \text{const}\}$, respectively, inside $\mathcal{V}$.

We claim that this is a contradiction to the assumption that $p_0 \in \partial\cO_1 \cap \partial\cO_2 \subset \Upsilon$. Indeed, the fact that $p_0$ belongs to the boundary of both means that it must belong to an essential connected component of the boundaries of both $\partial\cO_1$ and $\partial\cO_2$ (cf.\ Figure~\ref{fig:level-surface-configuration}-right), which is impossible, since they are, respectively, of the form $\{x_1 = \text{const}\}$ and $\{x_2 = \text{const}\}$, but $\cO_1 \cap \cO_2 = \emptyset$.
\end{proof}

We are now in a position to prove Theorem~\ref{thm:hot-spots-III}, which we recall also contains Theorem~\ref{thm:hot-spots-I} as a special case, corresponding to $\Gamma_{\rm D} = \emptyset$.

\begin{proof}[Proof of Theorem~\ref{thm:hot-spots-III}]
Denote by $\eta_1 = \eta_1 (A)$ the first eigenvalue of $A$ and let $u \in \ker (A - \eta_1)$; without loss of generality we may assume that $u$ is real-valued, see Lemma \ref{lem:A-Laplacian}. Then $u$ minimizes the Rayleigh quotient associated with $\sa$,
\begin{displaymath}
	u = \argmin \left\{ \frac{\sum_{j = 1}^d \int_\Omega |\nabla w_j|^2 - \int_{\partial \Omega} \langle L w, w \rangle}{\int_\Omega |w|^2} : 0 \neq w \in V \right\}.
\end{displaymath}

\emph{Step 1:} Under our assumptions on $\Omega$, we have that $|u| \in V$ and $|u|$ is also an eigenfield associated with $\eta_1$. Indeed, our assumptions on $\Omega$ together with Lemma~\ref{lem:sublattice} imply that
\begin{displaymath}
	|u| := \left(\begin{matrix} |u_1|\\ \vdots\\ |u_d| \end{matrix}\right) \in H^1 (\Omega)^d
\end{displaymath}
also satisfies the boundary condition $\langle |u|, \nu \rangle = 0$ at almost every point of $\Gamma_{\rm N}$. Furthermore, as $\Gamma_{\rm D}$ is perpendicular to a coordinate axis, i.e.\ its outer unit normal $\nu$ equals $\pm \e_j$ for some $j$, since $u$ is normal to $\Gamma_{\rm D}$, also $|u|$ is. Thus $|u| \in V$.

Next we consider the Rayleigh quotient of $|u|$. It is clear that $|\nabla |u_j|| \leq |\nabla u_j|$ holds for $j = 1, \dots, d$. For the boundary integral, if $p_0 \in \Gamma_{\rm N}$ is a regular point then by Lemma \ref{lem:normalComponents} there exists an open neighborhood $\cO (p_0)$ of $p_0$ in $\partial \Omega$ such that, without loss of generality,
\begin{align}\label{eq:Normal}
 \nu (p) = \left(0, \dots, 0, \beta (p), \alpha (p)\right)^\top
\end{align}
holds for all $p \in \cO (p_0)$, where $\alpha (p)$ and $\beta (p)$ have opposite sign, say $\beta (p) \leq 0 < \alpha (p)$, and $\alpha (p)^2 + \beta (p)^2 = 1$. In particular, locally around $p_0$, $\partial \Omega$ is flat in the directions of $e_1, \dots, e_{d-2}$. More precisely, if $\cO (p_0)$ is the graph of a smooth function $F : U \to \R$ for an open set $U \subset \R^{d-1}$, and we choose the parametrization 
\begin{align*}
 f : U \to \R^d, \quad f (x_1, \dots, x_{d-1}) = \big( x_1, \dots, x_{d-1}, F (x_1, \dots, x_{d-1}) \big)^\top,
\end{align*}
then 
\begin{align}\label{eq:tangent}
 \partial_j f = \ee_j + \partial_j F \ee_d, \quad j = 1, \dots, d-1,
\end{align}
and, possibly up to a sign,
\begin{align*}
 \nu = \sqrt{1 + (\partial_1 F)^2 + \dots + (\partial_{d-1} F)^2} \left(- \partial_1 F, \dots, - \partial_{d-1} F, 1 \right)^\top;
\end{align*}
a comparison of the latter with \eqref{eq:Normal} gives
\begin{align*}
 \partial_1 F = \dots = \partial_{d - 2} F = 0
\end{align*}
constantly in $\Omega$. In particular, $F$ and, hence, $\nu$ are independent of the variables $x_1, \dots, x_{d-2}$. Therefore $\partial_j \nu (x) = 0$ for $j = 1, \dots, d - 2$ and all $x \in U$, and it follows from the definition of $L_{p_0}$ that $L_{p_0}$ has at most one non-trivial eigenvalue $\kappa_{d - 1} (p_0)$. Since $\nu (p_0), \ee_1, \dots, \ee_{d-2}$ belong to $\ker L_{p_0}$ (cf.\ \eqref{eq:tangent}), a normalized eigenvector corresponding to $\kappa_{d-1} (p_0)$ is
\begin{align*}
 w = \left( 0, \dots, 0, - \alpha (p_0), \beta (p_0) \right)^\top.
\end{align*}
Furthermore, $\langle u (p), \nu (p) \rangle = 0$ implies that $u$ is of the form
\begin{align*}
 u (p) = \gamma (p) \left( *, \dots, *, - \alpha (p), \beta (p) \right)^\top.
\end{align*}
for some $\gamma (p) \in \R$. Hence the spectral theorem for the symmetric matrix $L_{p_0}$ gives
\begin{align*}
 \left\langle L_{p_0} |u (p_0)|, |u (p_0)| \right\rangle & = \kappa_{d - 1} (p_0) \langle |u (p_0)|, w \rangle^2 = \kappa_{d - 1} (p_0) \gamma (p)^2  \\ 
 & = \kappa_{d - 1} (p_0) \langle u (p_0), w \rangle^2 = \left\langle L_{p_0} u (p_0), u (p_0) \right\rangle,
\end{align*}
where we have used that both $- \beta (p_0)$ and $\alpha (p_0)$ are non-negative. If $\Gamma_{\rm D} \neq \emptyset$ and $p_0 \in \Gamma_{\rm D}$ is a regular point, then we automatically have $L_{p_0} = 0$, and so the same conclusion holds.

Summing up, we have shown that
\begin{displaymath}
	\frac{ \sa [|u|] }{\int_\Omega |u|^2} \leq \frac{ \sa [u] }{\int_\Omega |u|^2} = \eta_1,
\end{displaymath}
and thus $|u| = (|u_1|,\ldots,|u_d|)$ is also an eigenfunction for $A$ corresponding to $\eta_1$; in particular, by Lemma~\ref{lem:A-Laplacian}, for each $j$, $|u_j|$ is analytic and $-\Delta |u_j| = \eta_1 |u_j|$ pointwise in $\Omega$.

\emph{Step 2:} No component $u_j$ of the original eigenfunction $u$ can change sign in $\Omega$. Indeed, otherwise, $u_j$ must have at least two nodal domains, and thus a nodal set $\mathcal{N} \subset \Omega$ of positive $(d-1)$-dimensional measure separating them. Choose some open set $U \subset \Omega$ such that $U \cap \cN$ is a non-empty hypersurface. Then $|u_j|$ vanishes on $U \cap \cN$ by definition, but in particular $|u_j|$ attains a global minimum in $\Omega$ at every point in $U \cap \mathcal{N}$; thus, since $|u_j|$ is a smooth function, we also have $\nabla |u_j| = 0$ identically on $U \cap \mathcal{N}$. Thus both $|u_j|$ and its derivative on $U \cap \cN$ in the direction normal to that hypersurface vanish. As $- \Delta |u_j| = \eta_1 |u_j|$, a standard unique continuation argument implies that $|u_j|$, and thus $u_j$, is identically $0$ in $\Omega$, a contradiction to the assumption that $u_j$ changed sign.

\emph{Step 3:} We have $\eta_1 = \lambda < \tau_1 (A_2)$, and in particular, $\eta_1 > 0$. To see this, assume for a contradiction that some (without loss of generality real-valued) $u \in \ker (A - \eta_1)$ belongs to $\ker (A_2 - \eta_1)$ and, in particular, $\diver u = 0$ in $\Omega$.

If $\Gamma_{\rm D} \neq \emptyset$, we may assume without loss of generality that $\nu = -e_d$ almost everywhere on $\Gamma_{\rm D}$, so that $\Gamma_{\rm D}$ is a subset of the hyperplane $x_d = c$ for some $c \in \R$. Then for each $a \in \C^d$ we have
\begin{align}\label{eq:jetztSchlaegtsDreizehn}
\begin{split}
 \int_\Omega \langle a, u\rangle & = \int_\Omega \langle \nabla (\langle a, x\rangle - c), u \rangle = - \int_\Omega (\langle a, x\rangle - c) \diver u + \left( \langle a, x \rangle - c, \langle u, \nu\rangle \right)_{\partial \Omega}.
\end{split}
\end{align}
Plugging in $a = e_d$, the boundary term vanishes since $\langle u|_{\partial \Omega}, \nu \rangle = 0$ on $\Gamma_{\rm N}$ and $x_d = c$ on $\Gamma_{\rm D}$, and we obtain $\int_\Omega u_d = 0$. However, by Step 2, $u_d$ does not change sign in $\Omega$, and hence $u_d = 0$ identically. In particular, $\langle u|_{\partial \Omega}, \nu\rangle$ vanishes on $\partial \Omega$ regardless of whether $\Gamma_{\rm D} = \emptyset$ or not. Hence, \eqref{eq:jetztSchlaegtsDreizehn} reads
\begin{align*}
 \int_\Omega \langle a, u \rangle = 0
\end{align*}
for all $a \in \C^d$. Choosing $a = e_j$ for arbitrary $j$ and using again that none of the $u_j$ changes sign it finally follows that $u = 0$ identically in $\Omega$. Hence $\tau_1 (A_2) > \eta_1$, which also implies $\eta_1 = \lambda > 0$ by assumption on $\lambda$.

\emph{Step 4:} Conclusion. Let $\psi$ be an arbitrary eigenfunction of the Laplacian with Neumann boundary conditions on $\Gamma_{\rm N}$ and Dirichlet boundary conditions on $\Gamma_{\rm D}$ corresponding to $\lambda = \eta_1$, which we may assume to be real valued. By Proposition~\ref{prop:a-decomposition}, the function
\begin{displaymath}
 u = \left(\begin{matrix} u_1\\ \vdots \\ u_d \end{matrix}\right) := \nabla \psi \in \dom \sa = V
\end{displaymath}
belongs to $\ker (A - \eta_1)$. Hence, by Step 2, the components of $u$ do not change sign. Moreover, for each $j \in \{1, \dots, d\}$ we have $- \Delta u_j = \lambda u_j$, and since $\lambda > 0$, the maximum principle for subharmonic functions implies that either $u_j = \partial_j \psi > 0$ in $\Omega$, $u_j  = \partial_j \psi < 0$ in $\Omega$, or $u_j = \partial_j \psi = 0$ identically in $\Omega$. In particular, since $\psi$ is not constant, it follows that $\psi$ attains its minimum and maximum only on the boundary.
\end{proof}

\section{Hot spots on symmetric domains: proof of Theorem~\ref{thm:hot-spots-II}}
\label{sec:symm}

In this section we assume throughout that $\Omega$, in addition to Hypothesis~\ref{hyp:dom}, has a reflection symmetry in every hyperplane of the form $\{x_j = 0\}$, $j=1,\ldots,d$, and that there exists an orthant $\mathcal{O}$ such that $\Omega \cap \mathcal{O}$ is a lip domain (Definition~\ref{def:lip}). Denote by $-\Delta_{\rm N}$ the Neumann Laplacian on $L^2(\Omega)$, and note that Hypothesis~\ref{hyp:dom} guarantees that $\dom (-\Delta_{\rm N}) \subset H^2(\Omega)$. The following is standard.

\begin{lemma}
\label{lem:symm-basis}
There exists an orthonormal basis $(\psi_n)_n$ of $L^2(\Omega)$ consisting of eigenfunctions of $-\Delta_{\rm N}$ such that, for each $n \in \N$ and each $j=1,\ldots,d$, $\psi_n$ is either reflection symmetric or reflection antisymmetric in the plane $\{x_j = 0\}$, that is, either
\begin{displaymath}
\begin{aligned}
	\psi_n(x_1,\ldots,x_j,\ldots,x_d) &= \psi_n(x_1,\ldots,-x_j,\ldots,x_d) \quad &&\text{in}~\Omega,~\text{or}\\ %~\text{(symmetric case), or}\\
	\psi_n(x_1,\ldots,x_j,\ldots,x_d) &= -\psi_n(x_1,\ldots,-x_j,\ldots,x_d) \quad &&\text{in}~\Omega,%~\text{(antisymmetric case)},
\end{aligned}
\end{displaymath}
respectively.
\end{lemma}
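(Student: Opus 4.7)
The plan is to exploit the symmetry group $G := (\mathbb{Z}/2)^d$ acting on $\Omega$ by the reflections in the coordinate hyperplanes. Concretely, for each $j = 1, \ldots, d$, define the unitary involution $U_j : L^2(\Omega) \to L^2(\Omega)$ by $(U_j f)(x_1, \ldots, x_j, \ldots, x_d) := f(x_1, \ldots, -x_j, \ldots, x_d)$. Because reflecting $\Omega$ through $\{x_j = 0\}$ is an isometry of $\R^d$ that maps $\Omega$ onto itself and $\partial\Omega$ onto itself, $U_j$ is well-defined, unitary, and satisfies $U_j^2 = I$ (so its spectrum is contained in $\{+1,-1\}$).

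Next, I would verify that each $U_j$ commutes with $-\Delta_{\rm N}$. Since the reflection is a Euclidean isometry, it intertwines $-\Delta$ with itself on $C^\infty(\Omega)$; moreover, since the outer unit normal transforms covariantly under the reflection, the Neumann boundary condition $\partial_\nu \psi = 0$ is preserved. Formally, one checks that $U_j$ maps $\dom(-\Delta_{\rm N}) = \{\psi \in H^2(\Omega) : \partial_\nu \psi|_{\partial \Omega} = 0\}$ into itself and that $-\Delta_{\rm N} U_j = U_j (-\Delta_{\rm N})$ on this domain. The $U_j$ clearly commute pairwise since the reflections do.

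Now for each eigenvalue $\mu$ of $-\Delta_{\rm N}$, the eigenspace $\ker(-\Delta_{\rm N} - \mu)$ is finite-dimensional (by Hypothesis~\ref{hyp:dom} and compactness of the resolvent on a bounded Lipschitz domain) and invariant under the commuting family $\{U_1, \ldots, U_d\}$ of self-adjoint involutions. By the standard simultaneous diagonalization theorem for a commuting family of self-adjoint operators on a finite-dimensional space, one can choose an orthonormal basis of this eigenspace consisting of simultaneous eigenvectors of $U_1, \ldots, U_d$. For such a basis vector $\psi$, the eigenvalue of $U_j$ on $\psi$ is either $+1$ (meaning $\psi$ is symmetric under reflection in $\{x_j = 0\}$) or $-1$ (meaning $\psi$ is antisymmetric). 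Concatenating such bases over all Neumann eigenvalues yields the desired orthonormal basis $(\psi_n)_n$ of $L^2(\Omega)$.

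There is no real obstacle here; the only point requiring a small verification is that the reflections genuinely preserve $\dom(-\Delta_{\rm N})$, which is immediate from the geometric invariance of $\Omega$, the Sobolev regularity being preserved by pullback along a smooth isometry, and the transformation rule for the normal derivative under the reflection. Everything else is standard representation theory of the abelian group $(\mathbb{Z}/2)^d$, whose $2^d$ irreducible characters correspond exactly to the possible symmetric/antisymmetric patterns.
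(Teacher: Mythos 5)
Your proof is correct and is exactly the standard argument the paper alludes to when it states the lemma without proof ("The following is standard."): the commuting unitary involutions $U_1,\ldots,U_d$ preserve each finite-dimensional Neumann eigenspace, and simultaneous diagonalization of this abelian family yields the desired orthonormal basis. Nothing is missing; the only detail worth spelling out in a formal write-up is that $U_j$ maps $\dom(-\Delta_{\rm N}) \subset H^2(\Omega)$ into itself and commutes with $-\Delta_{\rm N}$, which you correctly flag and which follows from the reflection being a Euclidean isometry preserving $\Omega$ and hence the Neumann form.
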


In the antisymmetric case, $\psi_n$ satisfies a Dirichlet condition on $\{x_j=0\} \cap \overline{\Omega}$. In this case we will speak of a {\em $j$-antisymmetric function}. The next lemma is also standard.

\begin{lemma}
\label{lem:antisymm-mixed}
Fix $j=1,\ldots,d$ and denote by $(\psi_{n,j})_n$ the collection of eigenfunctions of $-\Delta_{\rm N}$ from Lemma~\ref{lem:symm-basis} which are $j$-antisymmetric, with associated eigenvalues $\mu_{n,j}$. Denote by $-\Delta_j^+$ the Laplacian on $\Omega_j^+ := \Omega \cap \{ x_j > 0 \}$ with Neumann conditions on $\partial\Omega \cap \{x_j > 0\}$ and Dirichlet conditions on $\{ x_j = 0\}$. Then $(\mu_{n,j})_n$ forms the totality of the spectrum of $-\Delta_j^{+}$, and $(\psi_{n,j}|_{\{x_j > 0\}})_n$ forms an orthonormal basis of $L^2 (\Omega_j^+)$ of eigenfunctions of $-\Delta_j^{+}$.

A corresponding statement holds for $\Omega_j^- := \Omega \cap \{ x_j < 0\}$ and $-\Delta_j^-$; in particular, $\mu_{1,j}$ is the first eigenvalue of both $\Omega_j^+$ and $\Omega_j^-$.
\end{lemma}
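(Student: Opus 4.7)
My plan is to use the standard reduction argument via quadratic forms: the scaled restriction map from $j$-antisymmetric $L^2$ functions on $\Omega$ to $L^2(\Omega_j^+)$ is a unitary isomorphism that intertwines the two operators. Let $R_j$ denote the reflection $x \mapsto (x_1, \dots, -x_j, \dots, x_d)$, and let $H_a \subset L^2(\Omega)$ be the closed subspace of $j$-antisymmetric functions, i.e.\ those $\psi$ with $\psi \circ R_j = -\psi$ almost everywhere. Since $\|\psi\|_{L^2(\Omega_j^+)}^2 = \|\psi\|_{L^2(\Omega_j^-)}^2$ for any $\psi \in H_a$, the scaled restriction $U\psi := \sqrt{2}\,\psi|_{\Omega_j^+}$ defines a unitary isomorphism $U : H_a \to L^2(\Omega_j^+)$, whose inverse is $j$-antisymmetric extension followed by division by $\sqrt{2}$.

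The key step is to show that $U$ intertwines the form $\mathfrak{b}_N[\psi] := \int_\Omega |\nabla\psi|^2$ on $H^1(\Omega)$ (restricted to the invariant subspace $H_a$) with the form $\mathfrak{b}_j^+[\varphi] := \int_{\Omega_j^+} |\nabla\varphi|^2$ on $\{\varphi \in H^1(\Omega_j^+) : \varphi|_{\{x_j=0\}\cap\overline\Omega} = 0\}$ (the forms associated with $-\Delta_{\rm N}$ and $-\Delta_j^+$, respectively). For this I need two facts: (a) if $\psi \in H^1(\Omega)$ is $j$-antisymmetric, then its trace on $\{x_j = 0\}\cap\overline\Omega$ vanishes, so $U\psi \in \dom \mathfrak{b}_j^+$; and (b) conversely, if $\varphi \in \dom \mathfrak{b}_j^+$, then its $j$-antisymmetric extension lies in $H^1(\Omega)$. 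With these in hand, the intertwining $\mathfrak{b}_N[\psi] = \mathfrak{b}_j^+[U\psi]$ for $\psi \in H_a \cap H^1(\Omega)$ is immediate by splitting the integral over $\Omega$ into its two symmetric halves and using $|\nabla(\psi \circ R_j)|^2 = |\nabla\psi|^2 \circ R_j$.

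Since both forms are closed and nonnegative, this unitary equivalence of forms induces a unitary equivalence of the associated self-adjoint operators, $U(-\Delta_{\rm N}|_{H_a})U^{-1} = -\Delta_j^+$. By Lemma~\ref{lem:symm-basis}, every element of $L^2(\Omega)$ decomposes uniquely into its $j$-symmetric and $j$-antisymmetric parts, so the subcollection $(\psi_{n,j})_n$ of $j$-antisymmetric elements of the basis is itself an orthonormal basis of $H_a$ consisting of eigenfunctions of $-\Delta_{\rm N}|_{H_a}$. Transferring via $U$ then yields that the scaled restrictions $(U\psi_{n,j})_n$ form an orthonormal basis of $L^2(\Omega_j^+)$ of eigenfunctions of $-\Delta_j^+$ with eigenvalues $\mu_{n,j}$, whence the claim (the factor of $\sqrt{2}$ absorbed into the basis is a harmless normalization convention). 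The statement for $\Omega_j^-$ is identical, and the equality of the first eigenvalues of $-\Delta_j^+$ and $-\Delta_j^-$ follows from the reflection symmetry $R_j(\Omega) = \Omega$, which intertwines these two mixed Laplacians.

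The main obstacle is fact (b): the $H^1$-character of the antisymmetric extension across $\{x_j = 0\}$. This is classical, but requires a small amount of care because the dividing hyperplane $\{x_j=0\}\cap\overline\Omega$ meets $\partial\Omega$ in a lower-dimensional set. The cleanest route is to approximate $\varphi \in \dom\mathfrak{b}_j^+$ by smooth functions compactly supported in $\Omega_j^+$ away from $\{x_j = 0\}$, extend each by zero across the hyperplane (which trivially produces an $H^1$ function on $\Omega$), form the antisymmetrization, and pass to the limit; the resulting extension has $H^1$ norm on $\Omega$ equal to $\sqrt{2}\,\|\varphi\|_{H^1(\Omega_j^+)}$.
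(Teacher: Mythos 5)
The paper does not supply a proof of this lemma, labelling it ``standard''; the reflection/restriction argument via forms that you outline is indeed the standard route and your overall structure — unitary $U : H_a \to L^2(\Omega_j^+)$ intertwining the two closed quadratic forms, hence their self-adjoint generators, and transfer of the antisymmetric eigenbasis — is sound. Fact (a) is correct, and the transfer of the orthonormal basis and the equality of first eigenvalues by symmetry are fine.

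There is, however, a genuine flaw in how you handle fact (b). You propose to approximate an arbitrary $\varphi \in \dom \mathfrak{b}_j^+$ in $H^1$ by smooth functions \emph{compactly supported in} $\Omega_j^+$. This cannot work: such functions lie in $C_c^\infty(\Omega_j^+)$, whose $H^1$-closure is $H^1_0(\Omega_j^+)$, which is strictly smaller than $\dom \mathfrak{b}_j^+ = \{\varphi \in H^1(\Omega_j^+) : \varphi|_{\{x_j=0\}\cap\overline{\Omega}} = 0\}$ — elements of the latter are allowed to have nonzero trace on the Neumann part $\partial\Omega\cap\{x_j>0\}$, while compactly supported functions vanish near all of $\partial\Omega_j^+$. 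What you actually need is density, in the $H^1$-norm, of functions that vanish in a neighbourhood of the Dirichlet piece $\{x_j=0\}\cap\overline\Omega$ but are otherwise unconstrained on $\partial\Omega_j^+$. This is true here, but it is itself a nontrivial statement about mixed boundary conditions near the Dirichlet/Neumann interface, so it should be stated correctly and justified. Alternatively, you can bypass density altogether: testing $\partial_{x_j}\tilde\varphi$ against $\chi \in C_c^\infty(\Omega)$, set $\eta := \chi + \chi\circ R_j$; then $-\int_\Omega \tilde\varphi\,\partial_j\chi = -\int_{\Omega_j^+}\varphi\,\partial_j\eta$, and integrating by parts on $\Omega_j^+$ produces (besides the volume term $\int_{\Omega_j^+}\partial_j\varphi\,\eta$, which equals the pairing of the even extension of $\partial_j\varphi$ with $\chi$) a single boundary contribution $2\int_{\{x_j=0\}\cap\Omega}\varphi\,\chi$, which vanishes precisely because of the trace condition on $\varphi$. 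The same direct computation handles the tangential derivatives with no boundary term at all, so $\tilde\varphi \in H^1(\Omega)$ and the rest of your argument goes through.
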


From these observations we will derive the following lemma.

\begin{lemma}
\label{lem:symm-tech}
Keeping the notation and assumptions of this section, let $\mu^j > 0$ be the smallest eigenvalue of $-\Delta_{\rm N}$ which has an eigenfunction $\psi^j$ which is antisymmetric in the hyperplane $\{x_j = 0\}$, $j=1,\ldots,d$. Then
\begin{enumerate}
\item[(a)] $\mu^j = \mu_{1,j}$, the smallest eigenvalue of $-\Delta_j^{\pm}$ on $\Omega_j^\pm$;
\item[(b)] for each $j=1,\ldots,d$, there exists a unique (up to scalar multiples) $j$-antisymmetric eigenfunction $\psi^j$ associated with $\mu^j$; moreover, $\{ x \in \Omega: \psi^j(x) = 0\}$ is exactly $\{ x_j = 0 \}$.%;
%\item[(c)] there exists $j=1,\ldots,d$ such that $\mu^j = \mu_2$; moreover, in the basis of Lemma~\ref{lem:symm-basis}, any eigenfunction associated with $\mu_2$ is $j$-antisymmetric for exactly one $j=1,\ldots,d$.
\end{enumerate}
\end{lemma}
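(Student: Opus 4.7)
\emph{Part (a)} follows immediately from Lemma~\ref{lem:antisymm-mixed}, which sets up an eigenvalue-preserving bijection between $j$-antisymmetric eigenfunctions of $-\Delta_{\rm N}$ on $\Omega$ and eigenfunctions of $-\Delta_j^+$ on $\Omega_j^+$ (restriction in one direction, antisymmetric reflection in the other). In particular, the smallest $j$-antisymmetric eigenvalue $\mu^j$ coincides with $\mu_{1,j}$.

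\emph{For part (b),} my strategy is to show that $\mu_{1,j}$ is a \emph{simple} eigenvalue of $-\Delta_j^+$ on $\Omega_j^+$; the bijection above then yields both the uniqueness of $\psi^j$ and the stated zero-set structure. The preliminary sub-step is to verify that $\Omega_j^+$ is connected. Given two points $p, q \in \Omega_j^+$ joined by a path $\gamma$ in the connected open set $\Omega$, I will reflect those portions of $\gamma$ lying in $\{x_j < 0\}$ back to $\{x_j > 0\}$ via the symmetry in $\{x_j=0\}$, producing a path $\tilde\gamma \subset \overline{\Omega_j^+}$. The points where $\tilde\gamma$ touches $\{x_j = 0\}$ are interior to $\Omega$, so each admits a small $\mathbb{R}^d$-ball inside $\Omega$ whose upper half is open and path-connected; perturbing $\tilde\gamma$ locally inside these half-balls then produces a genuine path in $\Omega_j^+$.

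Once $\Omega_j^+$ is known to be a connected, bounded, Lipschitz domain with nonempty Dirichlet face $\{x_j = 0\} \cap \overline{\Omega}$, simplicity of $\mu_{1,j}$ follows from the classical first-eigenvalue argument. A real-valued first eigenfunction $\phi$ can be replaced by $|\phi|$ without changing its Rayleigh quotient (the mixed form domain is stable under taking moduli), so $|\phi|$ is also a first eigenfunction; by the strong maximum principle applied to $-\Delta |\phi| = \mu_{1,j} |\phi| \geq 0$, either $|\phi| \equiv 0$ or $|\phi| > 0$ throughout $\Omega_j^+$, and hence $\phi$ itself has constant strict sign on $\Omega_j^+$. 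Two $L^2$-orthogonal first eigenfunctions, each of constant strict sign, cannot have vanishing inner product, so the multiplicity is one.

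Antisymmetrically extending the (essentially unique) positive first eigenfunction $\phi > 0$ on $\Omega_j^+$ across $\{x_j = 0\}$ yields $\psi^j$; by construction $\psi^j > 0$ on $\Omega_j^+$, $\psi^j < 0$ on $\Omega_j^-$, and $\psi^j = 0$ on $\{x_j = 0\} \cap \overline{\Omega}$, which gives exactly the claimed zero set in $\Omega$. The main obstacle is the connectedness of $\Omega_j^+$: it is intuitively clear from the symmetries of $\Omega$, but requires the reflect-and-perturb argument above, leveraging the openness of $\Omega$ at every point of $\{x_j = 0\} \cap \Omega$. Everything else is a direct application of either Lemma~\ref{lem:antisymm-mixed} or standard mixed-eigenvalue theory on a connected Lipschitz domain.
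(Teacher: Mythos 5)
Your proof matches the paper's approach: part (a) is obtained from the same bijection between $j$-antisymmetric $H^1$-functions on $\Omega$ and $H^1_{0,\{x_j=0\}}(\Omega_j^+)$ together with equality of Rayleigh quotients, and part (b) from simplicity of the first mixed eigenvalue on $\Omega_j^+$ (you spell out the modulus/strong-maximum-principle argument that the paper compresses into an appeal to ``Perron--Frobenius theory''). Your explicit attention to the connectedness of $\Omega_j^+$ --- which the paper takes for granted but which the Perron--Frobenius/simplicity argument genuinely requires --- is well placed, though the reflect-and-perturb sketch should be tightened slightly since the reflected path may touch $\{x_j=0\}$ on an infinite or otherwise complicated set; replacing the $j$-th coordinate $|\gamma_j(t)|$ of the reflected path by $\max\bigl(|\gamma_j(t)|,\delta\bigr)$ for $\delta>0$ smaller than both the endpoint coordinates and $\operatorname{dist}(\tilde\gamma([0,1]),\partial\Omega)$ produces the desired path in $\Omega_j^+$ in one step.
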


Note that $\mu^j$ may not be simple (and there be more than one $j$ such that $\mu_2 = \mu^j$); we are merely affirming that $\mu^j$, as an eigenvalue of $-\Delta_{\rm N}$, can only have one associated eigenfunction (up to scalar multiples) which vanishes on $\{x_j = 0\}$. Likewise, $\mu_2$ may be multiple, but in the basis described in Lemma~\ref{lem:symm-basis}, all eigenfunctions associated with $\mu_2$ are smallest $j$-antisymmetric eigenfunctions for some $j$.

\begin{proof}
Fix $j=1,\ldots,d$ and let $\mu^j$ be as in the statement of the lemma. For (a) we use that there is a natural bijection between $H^1_{0,\{x_j=0\}} (\Omega_j^+) = \{ u \in H^1(\Omega): u|_{\{x_j=0\}} = 0\}$ and
\begin{displaymath}
	H^1_j (\Omega) := \{ u \in H^1(\Omega): u~\text{$j$-antisymmetric} \},
\end{displaymath}
via restriction of functions in $H^1_j(\Omega)$ to $H^1_{0,\{x_j=0\}} (\Omega_j^+)$. Moreover, for any $u \in H^1_j (\Omega)$, its restriction (which we also denote by $u$) has the same Rayleigh quotient,
\begin{displaymath}
	\frac{\int_\Omega |\nabla u|^2}{\int_\Omega |u|^2} = \frac{2\int_{\Omega_j^+} |\nabla u|^2}{2\int_{\Omega_j^+} |u|^2}.
\end{displaymath}
It now follows from the respective variational characterizations of $\mu^j$ and $\mu_{1,j}$ that these two quantities are equal.

For (b), we use that, by Perron--Frobenius theory, as the first eigenvalue of the operator $-\Delta_j^+$, the eigenvalue $\mu_{1,j}$ is necessarily simple, and its eigenfunction $\psi_{1,j}$ does not change sign in $\Omega_j^+$. The identification in (a) clearly extends to equality between all eigenvalues of $-\Delta_j^+$, and the $j$-antisymmetric eigenvalues of $-\Delta_{\rm N}$. In particular, the antisymmetric extension of $\psi_{1,j}$ to $\Omega$ is the only eigenfunction in $H^1_j(\Omega)$ associated with $\mu^j$ (up to scalar multiples).
\end{proof}

We can now complete the proof of Theorem~\ref{thm:hot-spots-II}.

\begin{proof}[Proof of Theorem~\ref{thm:hot-spots-II}]
Fix any $j$, we are interested in the eigenvalue $\mu^j$ and its associated eigenfunction $\psi^j$. Since $\psi^j$ has only one hyperplane of antisymmetry by Lemma~\ref{lem:symm-tech}(b), in particular it is symmetric in every other coordinate hyperplane $\{x_i = 0\}$, $i\neq j$. In particular, via an argument similar to the one used in the proof of that lemma, restricted to the orthant $\Omega \cap \mathcal{O}$ satisfying the assumptions of the theorem, $\mu^j$ is still the first eigenvalue, and $\psi^j$ the (unique) first eigenfunction, of the Laplacian with Dirichlet conditions on the face $\partial (\Omega \cap \mathcal{O}) \cap \{x_j = 0\}$, and Neumann conditions elsewhere. We may now invoke Theorem~\ref{thm:hot-spots-III} directly to obtain that, for each $i$, either $\partial_i \psi^j < 0$ or $\partial_i \psi^j = 0$ or $\partial_i \psi^j > 0$ in $\Omega \cap \mathcal{O}$. By symmetry of $\psi^j$, a corresponding statement must hold in each orthant of $\Omega$, meaning in particular that $\psi^j$ cannot have any interior minima or maxima except possibly a hyperplane of the form $\{x_i = 0\}$ for some $i \neq j$.

Suppose for a contradiction that $\psi^j$ does in fact have an interior extremum, without loss of generality a maximum at some point $p := (0,p_2,\ldots,p_d) \in \{ x_1 = 0 \} \cap \Omega$, without loss of generality in $\Omega_j^+$  (where we keep the notation from Lemma~\ref{lem:antisymm-mixed}). By the maximum principle, necessarily $\psi^j (p) > 0$, so that in particular $\Omega_j^+ = \{ \psi^j > 0 \}$ by our antisymmetry assumptions on $\psi^j$.

Now, since $\psi^j = 0$ on $\{x_j = 0\}$, and by continuity of $\psi^j$, it follows from the trichotomy that $\partial_j\psi^j$ satisfies, in each open orthant, that necessarily $\partial_j \psi^j > 0$ on $\Omega_j^+ \setminus \{ x_1 = 0\}$. Since $\partial_j\psi^j \in C^\infty (\Omega)$, we also have $\partial_j \psi^j \geq 0$ everywhere in $\Omega_j^+$.

The only possibility is that $\psi^j$ is constant on the line segment joining $p$ to the boundary $\partial\Omega$,
\begin{displaymath}
	S_j:= \{ (0,p_2,\ldots,p_{j-1},x_j,p_{j+1},\ldots,p_d) : x_j \in [p_j, x_j^\ast] \},
\end{displaymath}
where $x_j^\ast \in \R$ is such that  $(0,p_2,\ldots,p_{j-1},x_j^\ast,p_{j+1},\ldots,p_d) \in \partial\Omega$. % (and, in particular, attains its maximum on $\partial\Omega$).
We claim this is impossible. Indeed, suppose $\psi^j$ is constant on $S_j$. We may assume without loss of generality that $p_j$ is the minimal value of $x_j$ at which $\partial_j \psi^j$ is zero (noting that $\partial_j \psi^j (0,p_2,\ldots,p_{j-1},x_j,p_{j+1},\ldots,p_d) > 0$ for $x_j$ small enough, since $\psi^j = 0$ on $\{x_j = 0\}$ and $\psi^j(p) > 0$).

Now $\phi := \partial_j \psi^j$ satisfies the eigenvalue equation $-\Delta \phi = \mu^j \phi$ identically in $\Omega$, meaning that $\phi$ is analytic; in particular, its restriction to the line segment $\{x_j \in \R: (0,p_2,\ldots,p_{j-1},x_j,p_{j+1},\ldots,p_d): x_j \in [0,x_j^\ast) \} \simeq [0,x_j^\ast)$ is an analytic one-dimensional function, which is strictly positive on $[0,p_j)$ and identically zero on $[p_j,x_j^\ast)$. This is impossible; hence $\psi^j$ cannot be constant on $S_j$, meaning our original assumption that $\psi^j$ attained a maximum at $p$ cannot hold.
\end{proof}

\section{On the operator $A$ in dimension three}
\label{sec:curl-curl}

We will now revisit the decomposition \eqref{eq:HelmholtzNew} in dimension $d=3$, and also take $\Gamma_{\rm D} = \emptyset$ for simplicity. In this case, it is possible to characterize $A_2$ explicitly in terms of a known operator, a so-called curl curl operator (see, e.g., \cite{HKT12,Z18}).

In fact, on a bounded Lipschitz domain $\Omega \subset \R^3$, consider the self-adjoint operator $C$ in the Hilbert space
\begin{align*}
 \cH_2 := \left\{ u \in L^2 (\Omega)^3 : \diver u = 0, \langle u |_{\partial \Omega}, \nu \rangle = 0 \right\}
\end{align*}
defined via the sesquilinear form
\begin{align*}
 \mathfrak{c} [u, v] & = \int_\Omega \langle \curl u, \curl v \rangle, \quad \dom \mathfrak{c} = \left\{ u \in \cH_2 : \curl u \in L^2 (\Omega)^3 \right\},
\end{align*}
where $\curl u = \nabla \times u = (\frac{\partial u_3}{\partial x_2} - \frac{\partial u_2}{\partial x_3}, \frac{\partial u_1}{\partial x_3} - \frac{\partial u_3}{\partial x_1}, \frac{\partial u_2}{\partial x_1} - \frac{\partial u_1}{\partial x_2})^\top$. A short calculation, which we omit, shows that $C$ is given by
\begin{align*}
 C u & = \curl \curl u, \\ 
 \dom C & = \left\{ u \in \cH_2 : \curl u, \curl \curl u \in L^2 (\Omega)^3, \curl u |_{\partial \Omega} \times \nu = 0 \right\},
\end{align*}
where the boundary condition has to be understood in an appropriate weak sense, see, e.g., \cite[Chapter I, Theorem 2.11]{GR}.
We claim that this operator coincides with $A_2$, the orthogonal projection of $A$ onto $\cH_2$.

\begin{proposition}
\label{prop:form-curl-curl}
Let $\Omega \subset \R^3$ be a bounded Lipschitz domain satisfying Hypothesis~\ref{hyp:dom}, and suppose that $\Gamma_{\rm D} = \emptyset$. Then $A_2 = C$.
\end{proposition}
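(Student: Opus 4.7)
The strategy is to verify that $A_2$ and $C$ arise from the same closed quadratic form on $\cH_2$. Both operators are non-negative and self-adjoint, and both correspond via the first representation theorem to closed, symmetric, semibounded forms: $A_2$ is associated to $\sa$ restricted to $V \cap \cH_2$, while $C$ is associated to $\mathfrak{c}$ on $\dom \mathfrak{c}$. It therefore suffices to establish (i) that the two form domains $V \cap \cH_2$ and $\dom \mathfrak{c}$ coincide, and (ii) that the quadratic forms agree on this common set.

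For (ii), the central tool is the pointwise identity
\begin{align*}
 |\nabla u|^2 = |\curl u|^2 + (\diver u)^2 + \diver \bigl( (u \cdot \nabla) u - u \, \diver u \bigr),
\end{align*}
which is a direct computation for any smooth $\R^3$-valued field $u$. Integrating over $\Omega$ and applying the divergence theorem reduces the last term to a boundary integral. When $u$ is tangential to $\partial \Omega$, the contribution of $u \, \diver u$ to this boundary integral vanishes, and the remaining piece can be rewritten using the defining relation $\partial_j \nu = - L \tau_j$ of the shape operator together with the fact that the tangential derivative of $\langle u, \nu\rangle \equiv 0$ along $\partial\Omega$ is zero; a short calculation, similar to the one in the proof of Lemma~\ref{lem:Weingarten}, yields $\langle (u \cdot \nabla) u, \nu\rangle = \langle L u, u\rangle$ at almost every boundary point (the singular set of Definition~\ref{def:piecewise-smooth} plays no role since it has zero $d-1$-dimensional Hausdorff measure). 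Hence
\begin{align*}
 \int_\Omega |\nabla u|^2 = \int_\Omega |\curl u|^2 + \int_\Omega (\diver u)^2 + \int_{\partial\Omega} \langle L u, u\rangle
\end{align*}
for sufficiently regular tangential $u$. Every term is continuous with respect to the $H^1$-norm, so by density of smooth tangential vector fields in $V$ the identity extends to all of $V$. Subtracting the boundary term from both sides produces $\sa[u, u] = \int_\Omega |\curl u|^2 + \int_\Omega (\diver u)^2$; restricting to $V \cap \cH_2$, where $\diver u \equiv 0$, finally gives $\sa[u, u] = \mathfrak{c}[u, u]$.

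The inclusion $V \cap \cH_2 \subset \dom \mathfrak{c}$ in (i) is trivial, because any $u \in H^1(\Omega)^3$ automatically has $\curl u \in L^2(\Omega)^3$. The converse $\dom \mathfrak{c} \subset V$ is the core technical point: it asserts that a vector field $u \in L^2(\Omega)^3$ with $\diver u = 0$, $\curl u \in L^2(\Omega)^3$, and $\langle u, \nu\rangle = 0$ on $\partial\Omega$ is in fact in $H^1(\Omega)^3$. This is the standard $H^1$-regularity for the stationary div-curl system; it is classical on $C^{1,1}$ or convex Lipschitz domains, and under Hypothesis~\ref{hyp:dom} the combination of piecewise smoothness with the uniform exterior ball condition suffices, by the results of \cite{HKT12,Z18} and references therein.

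Having secured (i) and (ii), uniqueness of the self-adjoint operator associated to a closed, symmetric, semibounded form forces $A_2 = C$. I expect the $H^1$-regularity step in (i) to be the main obstacle; by contrast, the form identity (ii) is essentially classical, and the only care needed is in handling the edges and corners, absorbed by the measure-zero singular set.
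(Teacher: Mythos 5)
Your proposal takes a genuinely different route from the paper, so let me both compare the two and flag the issue I see.

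The paper's proof does \emph{not} attempt to identify the two quadratic forms. Instead it notes that $C$ is self-adjoint with purely discrete spectrum in $\cH_2$, and then shows that every eigenfunction $u$ of $C$ (with eigenvalue $\eta$) already lies in $\dom A$ and satisfies $Au = -\Delta u = \eta u$; this is achieved by plugging $u$ into Lemma~\ref{lem:Weingarten}, whose hypotheses (a)--(d) are verified from the eigenvalue equation $\eta u = \curl\curl u = -\Delta u$, interior and boundary elliptic regularity near regular points, and the boundary condition $\curl u \times \nu = 0$ (which is exactly hypothesis (b) of the lemma). Since an orthonormal basis of $\cH_2$ consisting of eigenfunctions of $C$ is thereby also an eigenbasis of $A_2$ with matching eigenvalues, the two self-adjoint operators coincide. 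The only regularity input this requires is the classical smoothness of solutions of $-\Delta u = \eta u$ near smooth boundary points, cited from~\cite{CDN}.

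Your Bochner-type computation is correct: the pointwise identity $|\nabla u|^2 = |\curl u|^2 + (\diver u)^2 + \diver((u\cdot\nabla)u - u\diver u)$ holds, and for a tangential field the boundary term reduces to $\int_{\partial\Omega}\langle Lu,u\rangle$, so indeed $\sa[u,u]=\mathfrak{c}[u,u]+\int_\Omega(\diver u)^2$ on smooth tangential fields, and on $V\cap\cH_2$ this becomes $\sa[u,u]=\mathfrak{c}[u,u]$. That part is a clean and natural alternative derivation.

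The genuine gap is exactly the step you flag as the ``core technical point'' and then wave off: the inclusion $\dom\mathfrak{c}\subset H^1(\Omega)^3$, i.e.\ that $u\in L^2(\Omega)^3$ with $\diver u=0$, $\curl u\in L^2(\Omega)^3$ and vanishing normal trace must lie in $H^1(\Omega)^3$. This div-curl $H^1$-regularity is \emph{not} established by the references you cite: \cite{Z18} works on $C^{1,1}$ domains and \cite{HKT12} does not prove a regularity theorem of this kind. For general Lipschitz domains the inclusion is simply false (re-entrant corners), and while the uniform exterior ball condition of Hypothesis~\ref{hyp:dom} is a convexity-type assumption that should rule out the bad singularities, the class covered here includes non-convex piecewise-smooth polyhedra and the result needs an actual argument or a precise citation (Amrouche--Bernardi--Dauge--Girault would be the relevant source, but it treats convex and $C^{1,1}$ domains). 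The paper's eigenfunction route is precisely designed to avoid this form-domain identification: it only ever needs regularity of solutions of the Helmholtz equation, not regularity of the abstract form domain $\dom\mathfrak{c}$. Unless you supply a proof (or an applicable reference) of the embedding $\dom\mathfrak{c}\hookrightarrow H^1(\Omega)^3$ under Hypothesis~\ref{hyp:dom}, your argument does not close. A secondary, smaller point: you should also check that the form of $A_2$ really is $\sa$ restricted to $V\cap\cH_2$; this follows from the fact that the closed subspace $\nabla H^1_{0,\Gamma_{\rm D}}(\Omega)$ is $A$-invariant (Proposition~\ref{prop:a-decomposition}), but it is worth stating rather than assuming.
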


\begin{proof}
The operator $C$ is self-adjoint in $\cH_2$ with purely discrete spectrum, as follows from the compactness of the embedding of $\dom \mathfrak{c}$ in $\cH_2$, see, e.g., \cite{W74}. As $A_2$ is self-adjoint in $\cH_2$ as well, it suffices to show that each eigenfunction of $C$ is an eigenfunction of $A_2$ corresponding to the same eigenvalue. This follows easily from Lemma \ref{lem:Weingarten}: let $u \in \ker (C - \eta)$ for some $\eta \in \R$. Then 
\begin{align}\label{eq:curlCurlLaplace}
 \eta u = \curl \curl u = - \Delta u
\end{align}
due to $\Delta = \nabla \diver - \curl \curl$ and $\diver u = 0$. It follows from elliptic regularity that $u \in C^\infty (\Omega)$, and regularity up to the boundary, locally near regular boundary points, follows from \cite[Section 4.5]{CDN}. Hence condition (a) of Lemma \ref{lem:Weingarten} is satisfied. For condition (b) it suffices to note that $\{\partial_l u_k - \partial_k u_l\}_{k, l} \nu = \curl u \times \nu$, which vanishes almost everywhere on $\partial \Omega$ for $u \in \dom C$. Condition (c) is void as $\Gamma_{\rm D} = \emptyset$, and condition (d) follows from the eigenvalue equation. Hence it follows from the lemma that $u \in \dom A \cap \cH_2 = \dom A_2$ and $A_2 u = A u = - \Delta u = \eta u$ by \eqref{eq:curlCurlLaplace}. This confirms $A_2 = C$.
\end{proof}

\begin{remark}
In dimension $d = 2$, if $\Omega$ is simply connected then the operator $A_2$ turns out to have a peculiar property: all its eigenfunctions can be obtained as $u = \nabla^\perp \phi := (- \partial_2 \phi, \partial_1 \phi)^\top$, where $\phi$ is any eigenfunction of the Dirichlet Laplacian on $\Omega$. In particular, the spectrum of $A_2$ coincides with that of the Dirichlet Laplacian; cf.\ \cite[Theorem 3.2]{Rprep}. This is no longer true in higher dimensions. In fact, for simply connected $\Omega \subset \R^3$, the spectrum of $A_2 = C$ is strictly positive but not equal to the eigenvalues $\lambda_k$ of the Dirichlet Laplacian; for instance, by \cite[Corollary 3.3]{R24prep}, the first three eigenvalues of $C$ are strictly below $\lambda_1$.
\end{remark}

Inequalities for the eigenvalues of the curl curl operator $C$ have been of interest recently; for instance,  for strictly convex $C^{1,1}$-domains $\Omega \subset \R^3$ it was proved in \cite[Theorem~1.3]{Z18} that $\tau_1 (C) > \mu_2$, i.e., the smallest eigenvalue of $C$ is bounded below by the first positive Neumann Laplacian eigenvalue; this sharpens an earlier result \cite{P15}. We point out that Step 3 of the proof of our Theorem~\ref{thm:hot-spots-III} (in the case $\Gamma_{\rm D} = \emptyset$) in combination with Proposition \ref{prop:form-curl-curl} immediately yields the corresponding statement for (not necessarily convex) lip domains:

\begin{theorem}
Let $\Omega \subset \R^3$ be a bounded lip domain satisfying Hypothesis \ref{hyp:dom}. Then the smallest eigenvalue $\tau_1 (C)$ of the operator $C$ satisfies $\tau_1 (C) > \mu_2$.
\end{theorem}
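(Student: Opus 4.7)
The plan is to package together three ingredients already in hand: Proposition~\ref{prop:a-decomposition} (the spectrum of $A$ restricted to $\nabla H^1(\Omega)$ consists exactly of the positive Neumann Laplacian eigenvalues), Step~3 of the proof of Theorem~\ref{thm:hot-spots-III} (no first eigenfunction of $A$ is divergence-free), and Proposition~\ref{prop:form-curl-curl} (the identification $A_2 = C$ in three dimensions when $\Gamma_{\rm D} = \emptyset$). All three apply since we have a bounded three-dimensional lip domain under Hypothesis~\ref{hyp:dom}, and we work with $\Gamma_{\rm D} = \emptyset$.

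First, I would apply Proposition~\ref{prop:a-decomposition} in the form of the Helmholtz-type decomposition $L^2(\Omega)^3 = \nabla H^1(\Omega) \oplus \mathcal{H}_2$, which makes $A$ an orthogonal direct sum of the part acting on gradients, whose eigenvalues are precisely $\mu_2, \mu_3, \dots$ with eigenfunctions $\tfrac{1}{\sqrt{\mu_n}} \nabla \psi_n$, and the projection $A_2$ on $\mathcal{H}_2$. Consequently the ordered spectrum of $A$ is the interleaving of $\{\mu_n\}_{n \geq 2}$ and $\{\tau_k(A_2)\}_{k \geq 1}$, and in particular
\begin{equation*}
 \eta_1(A) = \min\bigl(\mu_2,\, \tau_1(A_2)\bigr).
\end{equation*}

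Next, I would invoke Step~3 of the proof of Theorem~\ref{thm:hot-spots-III}: under the lip domain hypothesis, any $u \in \ker(A - \eta_1(A))$ with $\diver u = 0$ and $\langle u|_{\partial\Omega},\nu\rangle = 0$ must vanish identically. This says precisely that $\eta_1(A)$ is not attained on the $A_2$ summand, so $\tau_1(A_2) > \eta_1(A)$. Combined with the display above, this forces $\eta_1(A) = \mu_2$, and therefore $\tau_1(A_2) > \mu_2$.

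Finally, I would apply Proposition~\ref{prop:form-curl-curl}, which identifies $A_2$ with the curl curl operator $C$ on $\mathcal{H}_2$, so $\tau_1(C) = \tau_1(A_2) > \mu_2$, completing the proof. There is no real obstacle here: the theorem is a direct corollary of material already developed, and the only thing worth underlining is the (trivial) bookkeeping step that the minimum in $\eta_1(A) = \min(\mu_2, \tau_1(A_2))$ is forced onto $\mu_2$ by the strict inequality coming from Step~3. No convexity of $\Omega$ is used anywhere; the lip domain structure enters exclusively through Step~3, which is why the result extends the conclusion of \cite{Z18} to the non-convex setting considered here.
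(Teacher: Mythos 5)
Your proof is correct and follows essentially the same route the paper takes: the paper itself states that the theorem follows from Step~3 of the proof of Theorem~\ref{thm:hot-spots-III} (which establishes $\eta_1(A) = \mu_2 < \tau_1(A_2)$ on a lip domain with $\Gamma_{\rm D} = \emptyset$) combined with Proposition~\ref{prop:form-curl-curl} (the identification $A_2 = C$). Your write-up simply makes explicit the bookkeeping --- via Proposition~\ref{prop:a-decomposition} and the Helmholtz decomposition --- that underlies the paper's claim $\eta_1(A) = \mu_2$, which is a reasonable amount of unpacking but not a different argument.
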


By combining this result (respectively, \cite[Theorem 1.3]{Z18}) with the results of Section 3, we obtain the following alternative variational principle for the first nonzero eigenvalue of the Neumann Laplacian. Recall that in this case
\begin{align*}
 V = \left\{ u \in H^1 (\Omega)^3 : \langle u |_{\partial \Omega}, \nu \rangle = 0 \right\}.
\end{align*}

\begin{corollary}
Assume that $\Omega \subset \R^3$ is a bounded, strictly convex $C^\infty$-domain or a lip domain satisfying Hypothesis \ref{hyp:dom}. Then 
\begin{align}\label{eq:minMax}
 \mu_2 = \min_{\substack{u \in V \\ u \neq 0}} \frac{\sum_{j = 1}^d \int_\Omega |\nabla u_j|^2 - \int_{\partial \Omega} \langle L u, u\rangle}{\int_\Omega |u|^2},
\end{align}
where $L$ is the shape operator introduced in Section~\ref{sec:operator}. Furthermore, $u \in V$ is a minimizer of \eqref{eq:minMax} if and only if $u = \nabla \psi$ for some $\psi \neq 0$ such that $- \Delta_{\rm N} \psi = \mu_2 \psi$ holds.
\end{corollary}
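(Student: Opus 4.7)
The plan is to recognize the right-hand side of \eqref{eq:minMax} as precisely the Rayleigh quotient of the form $\sa$ from Section~\ref{sec:operator}, specialized to $\Gamma_{\rm N} = \partial\Omega$, $\Gamma_{\rm D} = \emptyset$. Its infimum over nonzero $u \in V$ is therefore $\eta_1(A)$, the smallest eigenvalue of the auxiliary operator $A$, and is attained exactly on $\ker(A - \eta_1(A)) \setminus \{0\}$. Thus the corollary reduces to showing $\eta_1(A) = \mu_2$ and identifying $\ker(A - \mu_2)$ with $\nabla \ker(-\Delta_{\rm N} - \mu_2)$.

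Next I would invoke the Helmholtz-type decomposition \eqref{eq:HelmholtzNew} together with Proposition~\ref{prop:a-decomposition} and Proposition~\ref{prop:form-curl-curl}: the operator $A$ decomposes orthogonally as $A_1 \oplus A_2$ on $\nabla H^1(\Omega) \oplus \cH_2$, where the spectrum of $A_1$ is exactly the collection of nonzero Neumann Laplacian eigenvalues $\mu_n$ (with eigenfields $\tfrac{1}{\sqrt{\mu_n}} \nabla \psi_n$), and $A_2 = C$ is the curl curl operator, whose first eigenvalue is $\tau_1(A_2)$. Consequently,
\begin{equation*}
\eta_1(A) = \min\bigl\{\mu_2,\, \tau_1(A_2)\bigr\}.
\end{equation*}

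The key step is the strict inequality $\tau_1(A_2) > \mu_2$, which is the only place where the two geometric hypotheses enter. For strictly convex $C^\infty$-domains this is exactly \cite[Theorem~1.3]{Z18}; for lip domains satisfying Hypothesis~\ref{hyp:dom} it is Theorem~5.3 above, obtained from Step 3 of the proof of Theorem~\ref{thm:hot-spots-III} combined with Proposition~\ref{prop:form-curl-curl}. In either case $\eta_1(A) = \mu_2$, which proves \eqref{eq:minMax}. This is where I expect the only real subtlety to lie, but since both inputs are already in hand the argument is essentially an assembly.

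Finally, for the characterization of minimizers: any $u \in \ker(A - \mu_2)$ decomposes uniquely as $u = u_1 + u_2$ with $u_1 \in \nabla H^1(\Omega)$ and $u_2 \in \cH_2$; since $A$ respects this splitting, both summands are themselves in $\ker(A - \mu_2)$, and the strict inequality $\tau_1(A_2) > \mu_2$ forces $u_2 = 0$. By Proposition~\ref{prop:a-decomposition}, $u_1$ then lies in the span of the $\nabla \psi_n$ with $\mu_n = \mu_2$, i.e.\ $u = \nabla \psi$ for some $\psi \neq 0$ satisfying $-\Delta_{\rm N} \psi = \mu_2 \psi$. The converse direction, that every such $\nabla \psi$ belongs to $\ker(A - \mu_2) \subset V$, is also immediate from Proposition~\ref{prop:a-decomposition}, completing the proof.
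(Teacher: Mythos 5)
Your proposal is correct and matches the paper's (unwritten) argument: recognize the right-hand side as the Rayleigh quotient of $\sa$ with $\Gamma_{\rm D}=\emptyset$, use the Helmholtz splitting to write $\sigma(A)=\sigma(A_1)\cup\sigma(A_2)$ with $\min\sigma(A_1)=\mu_2$ (Proposition~\ref{prop:a-decomposition}) and $A_2=C$ (Proposition~\ref{prop:form-curl-curl}), and then invoke $\tau_1(C)>\mu_2$ — from \cite[Theorem~1.3]{Z18} in the strictly convex case, from the lip-domain theorem of Section~\ref{sec:curl-curl} in the other — to conclude $\eta_1(A)=\mu_2$; the characterization of minimizers then follows since the strict inequality kills the $\cH_2$-component of any first eigenfield. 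The paper only states that the corollary follows ``by combining'' these ingredients, and you have correctly supplied the details of that assembly.
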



\begin{thebibliography}{99}
\bibitem{AH} D.~R.~Adams and L.~I.~Hedberg, Function Spaces and Potential Theory, Grundlehren Math.\ Wiss.\ 314, Springer-Verlag, Berlin, 1996.

\bibitem{ARprep} N.~Aldeghi and J.~Rohleder, {\it On the first eigenvalue and eigenfunction of the Laplacian with mixed boundary conditions}, J.\ Differential Equations 427 (2025), 689--718.

\bibitem{AGMT10} M.~S.~Ashbaugh, F.~Gesztesy, M.~Mitrea and G.~Teschl, {\it Spectral theory for perturbed {K}rein {L}aplacians in nonsmooth domains}, Adv.\ Math.\ 223 (2010), 1372--1404.

\bibitem{AB04} R.~Atar and K.~Burdzy, {\it On {N}eumann eigenfunctions in lip domains}, J.\ Amer.\ Math.\ Soc.\ 17 (2004), 243--265.

\bibitem{BB99} R.~Ba\~nuelos and K.~Burdzy, {\it On the ``hot spots'' conjecture of J.\ Rauch}, J.\ Funct.\ Anal.~164 (1999), 1--33.

\bibitem{BP04} R.\ Ba\~nuelos and M.\ Pang, {\it An inequality for potentials and the ‘‘hot–spots’’ conjecture}, Indiana Univ.\ Math.\ J.\ 53 (2004), no.1, 35--47.

\bibitem{BPP04} R.\ Ba\~nuelos, M.\ Pang and M.\ Pascu, {\it Brownian motion with killing and reflection and the ‘‘hot-spots’’ problem}, Probab.\ Theory Related Fields 130 (2004), no.1, 56--68.

\bibitem{BW99} K.~Burdzy and W.~Werner, {\it A counterexample to the ``hot spots'' conjecture}, Ann.\ Math.\ (2) 149 (1999), 309--317.

\bibitem{CLW19} H.~Chen, Y.~Li, and L.~Wang, {\it Monotone properties of the eigenfunction of Neumann problems}, J.\ Math.\ Pures Appl.\ 130 (2019), 112--129.

\bibitem{CDN} M.\ Costabel, M.\ Dauge and S.\ Nicaise, Corner Singularities and Analytic Regularity for Linear Elliptic Systems. Part I: Smooth domains, 2010, ⟨hal-00453934v2⟩.

\bibitem{DP24} J.\ de Dios Pont, {\it Convex sets can have interior hot spots}, preprint (2024), arXiv:2412.06344.

\bibitem{EE} D.~E.~Edmunds and W.~D.~Evans, Spectral Theory and Differential Operators, Second Edition, Oxford Math.\ Monogr., Oxford University Press, Oxford, 2018.

\bibitem{GP19} H.\ Gernandt and J.\,P.\ Pade, {\it Schur reduction of trees and extremal entries of the Fiedler vector}, Linear Algebra Appl.\ 570 (2019), 93--122.

\bibitem{GM09} F.~Gesztesy and M.~Mitrea, {\it Nonlocal Robin Laplacians and some remarks on a paper by Filonov on eigenvalue inequalities}, J.\ Differential Equations~247 (2009), 2871--2896.

\bibitem{GR} V.\ Girault and P.\,A.\ Raviart, Finite Element Methods for Navier-Stokes Equations, Theory and algorithms, Springer Ser.\ Comput.\ Math.\ 5, Springer-Verlag, Berlin, 1986.

\bibitem{H24+} L.\ Hatcher, {\it First mixed Laplace eigenfunctions with no hot spots}, Proc.\ Amer.\ Math.\ Soc.\ 152 (2024), no. 12, 5191--5205.

\bibitem{H24a} L.\ Hatcher, {\it A hot spots theorem for the mixed eigenvalue problem with small Dirichet region}, J.\ Spectr.\ Theory, to appear.

\bibitem{H24b} L.\ Hatcher, {\it The hot spots conjecture for some non-convex polygons}, SIAM J.\ Math.\ Anal., to appear.

\bibitem{HKT12} R.~Hiptmair, P.~R.~Kotiuga, and S.~Tordeux, {\it Self-adjoint curl operators}, Annali di Matematica 191 (2012), 431--457.

\bibitem{JN00} D.~Jerison and N.~Nadirashvili, {\it The ``hot spots'' conjecture for domains with two axes of symmetry}, J.\ Amer.\ Math.\ Soc.\ 13 (2000), 741--772.

\bibitem{JM20} C.~Judge and S.~Mondal, {\it Euclidean triangles have no hot spots}, Ann.\ Math.\ (2) 191 (2020), 161--217.

\bibitem{JM20err} C.\ Judge and S.\ Mondal, {\it Erratum: Euclidean triangles have no hot spots}, Ann.\ of Math.~195 (2022), 337--362.

\bibitem{JM22} C.\ Judge and S.\ Mondal, {\it Critical points of Laplace eigenfunctions on polygons}, Comm.\ Partial Differential Equations 47 (2022), 1559--1590. 

\bibitem{KKVW09} U.~Kant, T.~Klauss, J.~Voigt, and M.~Weber, {\it Dirichlet forms for singular one-dimensional operators and on graphs}, J.\ Evol.\ Equ.~9 (2009), 637--659.

\bibitem{Kato} T.~Kato, Perturbation Theory for Linear Operators, Springer-Verlag, Berlin, 1995.

\bibitem{K85} B.\ Kawohl, Rearrangements and convexity of level sets in PDE, Lecture Notes in Mathematics 1150, Springer-Verlag, Berlin, 1985.

\bibitem{KR21} J.\,B.\ Kennedy and J.\ Rohleder, {\it On the hot spots of quantum graphs}, Commun.\ Pure Appl.\ Anal.\ 20 (2021), no.9, 3029--3063.

\bibitem{K21} A.\ Kleefeld, {\it The hot spots conjecture can be false: Some numerical examples},  Adv.\ Comput.\ Math.\ 47 (2021), Paper No.\ 85, 31 pp.

\bibitem{KT19} D.~Krej\v{c}i\v{r}\'ik and M.~Tu\v{s}ek, {\it Location of hot spots in thin curved strips}, J.\ Differential Equations 266 (2019), 2953--2977.

\bibitem{LS24+} R.\,R.\ Lederman and S.\ Steinerberger, {\it Extreme Values of the Fiedler Vector on Trees}, preprint, arXiv:1912.08327.

%\bibitem{LR17} V.~Lotoreichik and J.~Rohleder, {\it Eigenvalue inequalities for the Laplacian with mixed boundary conditions}, J.\ Differential Equations~263 (2017), 491--508.

\bibitem{McL} W.~McLean, Strongly Elliptic Systems and Boundary Integral Equations, Cambridge University Press, Cambridge, 2000.

\bibitem{P02} M.\,N.\ Pascu, {\it Scaling coupling of reflecting Brownian motions and the hot spots problem}, Trans.\ Amer.\ Math.\ Soc.\ 354 (2002), no. 11, 4681--4702.

\bibitem{P15} D.\ Pauly, {\it On constants in Maxwell inequalities for bounded and convex domains}, J.\ Math.\ Sci.\ (N.Y.) 210 (2015), no.6, 787--792.

\bibitem{Rprep} J.~Rohleder, {\it A new approach to the hot spots conjecture}, preprint (2021), arXiv:2106.05224.

\bibitem{R24prep} J.~Rohleder, {\it Curl curl versus Dirichlet Laplacian eigenvalues}, Bull.\ Lond.\ Math.\ Soc., to appear.

\bibitem{S20} S.~Steinerberger, {\it Hot Spots in Convex Domains are in the Tips (up to an Inradius)}, Comm.\ Partial Differential Equations 45 (2020), 641--654.

\bibitem{S23} S.~Steinerberger, {\it An upper bound on the hot spots constant}, Rev.\ Mat.\ Iberoam.\ 39 (2023), no. 4, 1373--1386.

\bibitem{W74} N.\ Weck, {\it Maxwell’s boundary value problem on Riemannian manifolds with nonsmooth boundaries}, J.\ Math.\ Anal.\ Appl.\ 46 (1974), 410--437.

\bibitem{Y11} P.~Yang, {\it The hot spots conjecture on a class of domains in $R^n$ with $n \geq 3$}, Acta Math.\ Appl.\ Sin.\ Engl.\ Ser.\ 27 (2011), 639--646.

\bibitem{Z18} Z.~Zhang, {\it Comparison results for eigenvalues of curl curl operator and Stokes operator}, Z.\ Angew.\ Math.\ Phys.\ 69 (2018), 104.
\end{thebibliography}
\end{document}